\newtheorem{lemma}{Lemma}[section]
\newtheorem{corollary}[lemma]{Corollary}
\newtheorem{proposition}[lemma]{Proposition}
\newtheorem{remark}[lemma]{Remark}
\newtheorem{theorem}[lemma]{Theorem}
\newcommand{\field}[1]{\mathbb{#1}}
\newcommand{\wt}[1]{\widetilde{#1}}
\newcommand{\C}{\field{C}}
\newcommand{\K}{\field{K}}
\newcommand{\N}{\field{N}}
\newcommand{\R}{\field{R}}
\newcommand{\Rn}{\R^n}
\newcommand{\Cal}[1]{\mathcal{#1}}
\newcommand{\cF}{\Cal{F}}
\newcommand{\bA}{{\bf A}}
\newcommand{\bb}{{\bf b}}
\newcommand{\be}{{\bf e}}
\newcommand{\bff}{{\bf f}}
\newcommand{\bG}{{\bf G}}
\newcommand{\bp}{{\bf p}}
\newcommand{\bP}{{\bf P}}
\newcommand{\bQ}{{\bf Q}}
\newcommand{\bt}{{\bf t}}
\newcommand{\bu}{{\bf u}}
\newcommand{\bv}{{\bf v}}
\newcommand{\bw}{{\bf w}}
\newcommand{\bx}{{\bf x}}
\newcommand{\by}{{\bf y}}
\newcommand{\cA}{\Cal{A}}
\newcommand{\cB}{\Cal{B}}
\newcommand{\cC}{\Cal{C}}
\newcommand{\cE}{\Cal{E}}
\newcommand{\cI}{\Cal{I}}
\newcommand{\cM}{\Cal{M}}
\newcommand{\cO}{\Cal{O}}
\newcommand{\cP}{\Cal{P}}
\newcommand{\cU}{\Cal{U}}
\newcommand{\cV}{\Cal{V}}
\newcommand{\cW}{\Cal{W}}
\newcommand{\clos}{{\rm clos}}
\newcommand{\dd}{{\partial}}
\newcommand{\lbd}{{\lambda}}
\newcommand{\Gm}{\Gamma}
\newcommand{\omg}{\omega}
\newcommand{\rd}{{\rm d}}
\newcommand{\sgm}{\sigma}
\newcommand{\Sgm}{\Sigma}
\newcommand{\Sp}{{\rm Sp}}
\newcommand{\tldby}{{\tilde{\by}}}
\newcommand{\ve}{\varepsilon}
\newcommand{\vp}{\varphi}
\newcommand{\wdg}{\wedge}
\newcommand{\wtbx}{{\widetilde{\bx}}}
\newcommand{\wtby}{{\widetilde{\by}}}
\newcommand{\wtcE}{{\widetilde{\cE}}}
\newcommand{\wtcU}{{\widetilde{\cU}}}
\newcommand{\wtcV}{{\widetilde{\cV}}}
\newcommand{\wtE}{{\widetilde{E}}}
\newcommand{\wtN}{{\widetilde{N}}}
\numberwithin{equation}{section}
\begin{document}
\title[Re-parameterizing and reducing families of Operators]{Re-parameterizing and reducing families of normal operators}
\author[V. Grandjean]{Vincent Grandjean}
%
\address{Departamento de Matem\'atica, Universidade Federal do Cear\'a
(UFC), Campus do Pici, Bloco 914, CEP. 60455-760. Fortaleza-CE,
Brasil}
\email{vgrandjean@mat.ufc.br}
\thanks{The author is most grateful to IMPAN for their working conditions while visiting
Warsaw and working on this paper. This work was also partially supported by CNPq-Brazil 
grant 305614/2015-0}
%
%
\date{\today}
%
%
%
\keywords{normal operator, reduction of endomorphism, desingularization, blowings-up, real analytic coherent sheaves,
eigen-bouquet, quadratic ideals}
\begin{abstract}
{We present a new demonstration of a generalization of results of Kurdyka \& Paunescu, and of Rainer, 
which are multi-parameters versions of classical theorems of Rellich and Kato about the reduction in 
families of univariate deformations of normal operators over real or complex vector spaces of finite 
dimensions.

Given a  real analytic normal operator $L:F\to F$ over a connected real analytic real or complex 
vector bundle $F$ of finite rank equipped with a fibered metric structure (Euclidean or Hermitian), 
there exists a locally finite composition of blowings-up of the basis manifold $N$, with smooth centers, $\sgm:\wtN \to N$, such 
that at each point $\wtby$ of the source manifold $\wtN$ it is possible 
to find a neighbourhood of $\wtby$ over which exists a real analytic orthonormal/unitary frame in which the pulled-back 
operator $L\circ\sgm :\sgm^*F\to\sgm^*F$ is in reduced form. 
We are working only with the \em eigen-bouquet bundle \em of the operator and 
a free by-product of our proof is the local real 
analyticity of the eigen-values, which in all prior works was a prerequisite step to get
local regular reducing bases.
}
\end{abstract}
\maketitle
%
%
%
%
%
%
%
%
%
%
%
%
%
%
%

%
%
%
%
%
%
%
%
%
%
%
%
%
%
%
%
%
%
%
%
\section{Introduction}
Let be a regular (continuous, $C^k$, $C^\infty$, real analytic) family $\bt \to A(\bt) \in Hom(\K^n,\K^n)$ 
of real or complex normal operators. For each parameter $\bt$ the operator $A(\bt)$ is in reduced form
in some orthonormal/unitary basis $f(\bt)$ of $\K^n$. What could be the regularity of the mapping $\bt \to f(\bt) \in Frame(\K^n)\,$?

Rellich shows that the eigen-values, 
of a real analytic family $\R\ni \bt \to A(\bt)$ of symmetric operators over $\R^n$, can be chosen locally 
real analytic at any parameter $\bt_0$, and there exists a real analytic section 
$(\R,\bt_0) \ni\bt \to f(\bt) \in Frame(\R^n)$ consisting of orthonormal eigen-vectors of $A(\bt)$ \cite{Rel1,Rel2}.
Kato investigates similar problems for single complex variable perturbations 
(holomorphic or not) of a given operator over $\C^n$, and gets some regularity 
results for the eigen-values as well as for the family of diagonalizing bases \cite[Chapter 2]{Kat}. 

The multi-parameter problem was solved more recently with the best possible outcome: 
in general a real analytic family of symmetric matrices, $\R^{k(\geq 2)}\ni\bt  \to A(\bt) \in Hom(\R^n,\R^n)$, 
is not locally diagonalizable along a real analytic (or even continuous) orthonormal frame of eigen-vectors. 
Nevertheless, (and principally) as a consequence of their study of roots of real analytic monic hyperbolic 
polynomials, Kurdyka and Paunescu show that \cite{KuPa}, there exists a real analytic surjective mapping $\sgm: N \to \R^k$,  
with connected source manifold $N$, so that the eigen-values and 
a corresponding orthonormal frame of eigen-vectors of the re-parameterized family $N\ni\wt\bt \to A(\sgm(\wt\bt))$
can be chosen locally real analytic. They deduce the analog result for real analytic 
families of anti-symmetric matrices. 
Around the same time, Rainer have started a series of papers about (mostly) roots of regular monic complex 
polynomials \cite{Rai1,Rai2,Rai3,Rai4}, and finds analogues of Kurdyka \& Paunescu result for 
multi-variate quasi-analytic families of monic complex polynomials, which when applied to
a quasi-analytic family of complex normal matrices, provides, up to a quasi-analytic re-parameterization 
of the family, the possibility to choose locally quasi-analytically the eigen-values as well as a 
local unitary frame of eigen-vectors.

The multi-parameters results 
proceed as follows: First, re-parameterize the domain so that 
the eigen-values can be locally chosen real/quasi-analytic; Second,
solve the corresponding eigen-value linear system and arrange it by further 
re-parameterizations if needs be.
Kurdyka \& Paunescu and Rainer results are that strong because 
quasi-analytic functions admit a resolution of singularities \cite{Hir,BiMi1,BiMi2}
or a local uniformization. Indeed their  
re-parameterization mappings are (locally) finite composition of geometrically admissible blowings-up 
\cite{KuPa,Rai3} and of ramifications \cite{Rai2}. 
Most of the work of 
\cite{KuPa,Rai2,Rai3} lies in the local regularization of the eigen-values.
Observe that regularizing quasi-analytic families of complex polynomials usually requires ramifications
\cite{Rai2}, while quasi-analytic families of complex normal matrices avoid them \cite{Rai3}.

\smallskip 
Before starting to describe the content of our paper let us make the following:

\smallskip\noindent
{\bf Observations:} \em 1) Rellich already noticed that regularizing eigen-vectors bases 
is harder to handle than regularizing eigen-values. Indeed, 
the latter is of projective nature, that is insensitive to rescaling (see for instance 
\cite[p. 111]{Kat}). 

\smallskip 
2) Let $\bt\to A(\bt)\in Hom(\K^n,\K^n)$ be a regular (real or quasi-analytic) family of operators.
Let $\bt_0$ be a point of the parameter space (a regular manifold) admitting a neighbourhood $\cV$ over which
exists a regular mapping $\bt \to f(\bt)\in Frame(\K^n)$ consisting of eigen-vectors
of the family $(A(\bt))_{\bt\in\cV}$. 
Evaluating the regular operator $A$ along the regular local frame provides the local regularity of the eigen-values. 
\em

\medskip
Beside our initial motivation to be interested in this problem (see \cite{Gra,BBGM}),
the above observation is the starting point of the material presented here. 

\bigskip
Let $F$ be a real analytic $\K$-vector bundle of finite rank $n$ over a real analytic connected manifold $N$.
We further assume that $F$ is equipped with a real analytic 
fibered Euclidean ($\K = \R$) or Hermitian  $(\K = \C)$ structure. 
A \em characteristic vector of the normal operator $A:\K^n\to\K^n$, \em 
is either an eigen-vector or, when $\K=\R$ and if it is not an eigen-vector, is lying in a real plane invariant by $A$ 
along which the restriction of $A$ is a similitude (a rotation composed with a dilation). 

\medskip\noindent
In order to hint at what we are doing in this note, let us present the following

\smallskip\noindent
{\bf Example.} 
We re-visit \cite[Example 6.1.]{KuPa} with our point of view:
Let $F:=N\times \R^2$ with $N = \R^2$, and let $L:F\to F$ be given by the symmetric matrix
$$
M(x,y) = 
\left[
\begin{array}{cc}
x^2 & xy \\
xy & y^2
\end{array}
\right].
$$
The eigen-values are $0$ and $x^2 + y^2$, and the eigen-spaces are 
$(1,\frac{y}{x})$ and $(1,-\frac{x}{y})$. There is 
a $\bP^1$ of distinct pairs of eigen-spaces accumulating at $(0,0)$.  

\smallskip\noindent
The eigen-bouquet bundle of $L$ is the \em union of the eigen-spaces bundled over $N$, \em that is the subset:
$$
\{((x,y);(X,Y))\in F \,:\, M(x,y)\cdot(X,Y)\wdg(X,Y) = 0\} = \{ (yY + xX)(yX-xY) = 0\}.
$$
Let $\bx := (x,y)$. The ideal $J_\bx$ of the polynomials over $F_\bx$ vanishing along 
the eigen-bouquet of $L(\bx)$ is reduced, principal and given by 
$$
J_\bx = (yY + xX)(yX - xY).
$$
As expected $J_{(0,0)} = (0)$.  
Blowing-up the origin $(0,0)$ in $N$, the base manifold of the vector bundle $F$, and considering
the chart $\sgm:\bu:=(u,v) \to (u,uv)$ (the other chart
will lead to a similar computation), gives
\begin{center}
$\sgm^*(J_\bx) = u^2(vY - X)(vX+Y) =(u)^2\cdot J_\bu'$ with $J_\bu' = (vY + X)(vX - Y)$
\end{center}
the vanishing locus of $J_\bu'$ is a union of two orthogonal lines
of the fiber $(\sgm^*F)_\bu$, each of which invariant by $(L\circ\sgm)(\bu)$. Those pairs of lines 
"move" analytically in the parameter $\bu\in \wtN$.

\smallskip\noindent
{\bf Remarks about the example.} \em
Controlling point-wise all the eigen-spaces (the eigen-bouquet) is achieved
by means of a single object: \em the family of quadratic ideals $(J_\bx)_{\bx\in N}$. \em This
method did not involve resolving simultaneously the equations of the eigen-spaces, 
neither it needed to do anything about the eigen-values. \em

\medskip
The discriminant locus $D_L$ of an operator $L:F\to F$ is the locus of points of $N$ where the number 
of distinct (complex) eigen-values of $L$ is not locally constant, in other words it is the
discriminant locus of the reduced characteristic polynomial of $L$ (that is the vanishing locus of the  
generalized discriminant of the characteristic polynomial \cite{Whi}). 
A center of blowing-up is \em geometrically admissible \em if it is smooth
and normal crossing with the existing exceptional divisor (see \cite{Hir,BiMi1}).

The main result of this paper is the following:

\medskip\noindent
{\bf Theorem \ref{thm:main-global}.} \em
Let $L:F \to F$ be a real analytic  normal operator over a real analytic $\K$-vector bundle
$F$ of finite rank over a connected real analytic manifold $N$, and equipped with a real analytic 
fiber Euclidean/Hermitian structure. 

\smallskip
There exists a locally finite composition of geometrically admissible blowings-up $\sgm:(\wtN,\wtE) \to (N,D_L)$
such that for any $\tldby \in \wtN$, there exists $\wtcV$ a neighbourhood of $\tldby$
and real analytic vector sub-bundles $R_1,\ldots,R_s$ of $\sgm^*F|_\cV$ such that 

(a) They are pair-wise orthogonal and everywhere in direct sum;

(b) The restriction of $L\circ\sgm$ to each $R_i$ is reduced: either it is a multiple
of identity or $R_i$ is real of real rank 2, and the restriction of $L\circ\sgm$
is fiber-wise a similitude. 
\em

\smallskip
As announced and suggested in the example above, we {\bf neither computed eigen-values 
nor solved any eigen-value linear system}. Our proof does not fix/better (re-parameterize)
the eigen-values of the operator $L$. 

\smallskip
Evaluating $L\circ\sgm$ along 
the sub-bundles $R_i$ gives the following

\medskip\noindent
{\bf Corollary \ref{cor:main-global}.} \em
For any $\tldby \in \wtN$, there exists $\wtcV$ a neighbourhood of $\tldby$ such that 
the eigen-values of $\,\wtbx\to L(\sgm(\wtbx))$ are real analytic.
\em 

\medskip
Finding locally a real analytic frame of eigen-vectors is problematic only at points of the 
 discriminant locus $D_L$ of $L$: different eigen-value functions 
take the same value and the accumulations at a given discriminant point of their corresponding eigen-spaces can be 
quite complicated. The problem here is not only to separate different limits of a sub-bundle
(eigen-space) but do that simultaneously for several. Outside $D_L$ we can define a real analytic 
multi-valued mapping 
$$
\begin{array}{rcl}
N\setminus D_L & \buildrel\Gm\over\longrightarrow & \cup_{k=1}^n \bG(k,F)\\
\bx & \longrightarrow & \{[E_1(\bx)],\ldots,[E_{s_L}(\bx)]\}
\end{array}
$$
where $s_L$ is the number of distinct complex eigen-values outside the discriminant locus $D_L$ 
and $E_1(\bx),\ldots,E_{s_L}(\bx)$ are the eigen-spaces (when diagonalizable), characteristic spaces 
(when not diagonalizable) of $L(\bx)$.
\\
Note that the second part of the proofs of \cite{KuPa,Rai3} locally regularizes the "eigen-spaces" 
mappings one after another, using the local regularity of the eigen-values.

\smallskip\noindent
The proof of Theorem \ref{thm:main-global} uses the family of polynomial ideals $(J_\by)_{\by\in N}$ 
vanishing along the \em bouquet of eigen-spaces \em or \em eigen-bouquet: \em the union of the eigen-spaces 
of $L(\by)$. This family of ideals encodes information about the family of eigen-bouquets as does $(\Gm(\by))_{\by\in N}$.
Any such ideal $J_\by$ is reduced and generated by quadratic polynomials.
Let $J_{\by,2}\subset (S^2F^\vee)_\by$ (the second symmetric power of the dual bundle $F^\vee$ of $F$)
be the subspace of the quadratic polynomials over $F_\by$ generated 
by the quadratic polynomials belonging to $J_\by$. Outside $D_L$ its dimension is constant equal to $d_L$.
To the (vector) bundle-d union $\cup_{\by\in N} J_{\by,2}\subset S^2F^\vee$ we associate the $\cO_N$-coherent module
of its real analytic sections, say $\cA_L$, which is locally free of rank $d_L$. Let $\cM_L := \wedge^{d_L} \cA_L$ 
which is also a $\cO_N$-coherent module with co-support $D_L$. Its ideal of coefficients $\cF_L$ is the maximal Fitting 
ideal of $\cA_L$, that is the $\cO_N$-ideal locally generated by the $d_L\times d_L$-minors of $\cA_L$. 
The ideal $\cF_L$ can be principalized (and monomialized, though unnecessary) by the classical Theorems of 
embedded resolution of singularities by a locally finite composition of geometrically admissible blowings-up 
$\sgm:(\wtN,\wtE) \to (N,D_L)$. In other words $\cO_\wtN$-module $\cM_L' := (\sgm^*\cF_L)^{-1}\cdot\sgm^*\cM_L$ 
(weak transform of $\sgm^*\cM_L)$ is locally free of rank $1$ with empty co-support. The module $\cM_L'$ 
corresponds to a vector sub-bundle of $S^2F^\vee$ of rank $d_L$
which generates the reduced ideals $J_\bx' \subset (S(\sgm^*F)^\vee)_\bx$, which at every point $\bx$ vanishes 
along a bouquet of eigen-subspaces of $L\circ\sgm(\bx)$ (in the diagonalizable case) which are in orthogonal direct sum.
And thus we are done for the diagonalizable case.
The real normal but not symmetric case is deduced from the real symmetric case. 

\medskip
The paper is organized as follows:
Section \ref{section:MP} introduces very quickly some of the  objects to be used in the paper.
Section \ref{section:ANO} recalls elementary facts about normal operators and insists about the real ones 
when not diagonalizable.
Section \ref{section:SVEPA} and Section \ref{section:SLAMA} present very elementary algebraic facts about
bouquets of linear subspaces which are in direct sums and how these properties relates to diagonalization
of an endomorphism and to the ideal of the polynomials vanishing along the bouquet.
Section \ref{section:TKL} presents the Essential Lemma \ref{lem:essential} and its Corollary.
Although mostly technical, this Lemma catches the very nature of the 
structure we are looking for.
The most important work is done in Section \ref{section:RM}, which after studying thoroughly 
some local properties of the (pointed)-eigen-bouquet bundle, provides a proof of the main result 
in the local case, that is when $F$ is trivial under some special hypotheses that we know achievable 
by composition of blowings-up if need be. 
Section \ref{section:GEM} finishes to treat the whole local case, and
Remark \ref{rk:one-dim} shows how little work the 
one-dimensional case requires. Section \ref{section:GC}
deals with the general case presented above and Section \ref{section:RC} ends the real
case. We would like to single out Remark \ref{rmk:invertible-sheaf} to emphasize
what our method can do that the regularization of eigen-values strategy would struggled to achieve.
%
%
%
%
%
%
%
%
%
%
%
%
%
%
%
%
%
%
%
%
\section{Miscellaneous preliminaries}\label{section:MP}
$\bullet$
In the rest of this note we will write $\K$ either for $\R$ or for $\C$.

\smallskip\noindent
$\bullet$ A \em $\K$-vector bundle $F$ \em over a topological space $S$
is just that: a "bundle of $\K$-vector spaces" over each point of $S$, namely
$$
F = \cup_{\by \in S} \{\by\}\times F_\by
$$
where each $F_\by$ is a $\K$-vector space.

If $R$ is any subset of $S$, let us denote $F|_R:= \cup_{\by\in R}$ the restriction of $F$
to $R$.

If all fibers $F_\by$ have dimension (over $\K$) $0 \leq r < +\infty$, then we speak of a $\K$-vector bundle or \em rank $r$. \em

Given a $\K$-vector bundle $F$ over $S$ we associate the projective bundle $\bP F:=\cup_{\by\in S}\{\by\}\times \bP F_\by$.
A subset $Z:=\cup_{\by\in S}\{\by\}\times Z_\by$ with ($\emptyset\neq$)$Z_\by\subset\bP F_\by$ is called a \em
sub-bundle of $\bP F$. \em To any such sub-bundle of $\bP$, we can define $Cone(Z) := \cup_{\by\in S}\{\by\}\times Cone(Z_\by)$ 
a subset of $F$ called \em the cone bundle over $Z$, \em where $Cone(Z_\by)$ is the $\K$-cone of $F_\by$
over $Z_\by$. 

\smallskip
Let $S$ be a connected manifold of regularity $\cC$ (continuous, $C^k$ for $k=1,\ldots,\infty$, real analytic).
A real $\cC$-$\K$-vector bundle over $S$ of rank $r$ is a $\K$-vector bundle $F$ over $S$ 
which is a $\cC$-manifold and which is locally $\cC$-trivial:
that is for each $\by\in S$, there exists an open neighbourhood $\cU$ of $\by$ in $S$ such that:

(i) there exists a $\cC$-diffeomorphism $\Phi:F|_\cU \to \cU \times \K^r$;

(ii) the mapping $\Phi$ is such that $\Phi(\bx;\bv) = (\vp(\bx),G(\bx)\bv)$, where $\vp$ is a $\cC$-diffeomorphism
of $\cU$ onto $\vp(\cU)$, and $G(\bx):F_\bx\to \K^r$ is a $\K$-linear isomorphism.

\smallskip
Let $F$ be a $\cC$-$\K$-vector bundle over $N$ and let $\sgm: N'\to N$ be a $\cC$-regular mapping.
We denote $\sgm^*F$ the $\cC$-$\K$-vector bundle over $N'$ obtained by the base change $\sgm$.

\smallskip
Given $F$ a $\cC$-$\K$-regular vector bundle of rank $r$ over the $\cC$-manifold $S$, let $\dot F$ (also denoted $F^\cdot$ or $(F)^\cdot$)
be the \em pointed $\cC$-vector bundle of $F$, \em namely the fiber $\cC$-sub-bundle of $F$ obtained
by removing its zero section $S\times 0$.

\smallskip
Let $F$ a $\cC$-$\K$-regular vector bundle of rank $r$ over the $\cC$-manifold $\cU$. 
For every $0\leq s\leq r$, let $\bG(s,F)$ be the Grassmann fiber $\cC$-bundle of $F$ of the 
$\K$-vector subspaces of $F$ of dimension $s$, in other words $\bG(s,F)_\by := \bG(s,F_\by)$
at any $\by\in S$. 

Let $SF^\vee$ be the symmetric algebra vector bundle over $F$, built from $F^\vee$ the dual of $F$,
$$
SF^\vee = (\cU\times\K)\oplus F^\vee \oplus S^2F^\vee\oplus \ldots \oplus S^kF^\vee\oplus \ldots
$$
where $S^kF^\vee$, for $k\geq 2$ is the symmetric bundle of $F $ of order $k$.

\bigskip\noindent
$\bullet$
In the real analytic category we will sometimes use \em regular \em for a subset or a mapping to
mean it is real analytically regular.
%
%
%
%
%
%
%
%
%
%
%
%
%
%
%
%
%
%
%
%
%
%
%
%
%
%
\section{About normal operators}\label{section:ANO}
The material presented here is basic, but may be for some aspect of the terminology.
The real part will be used in Section \ref{section:RC}.

\bigskip
Let $F$ be a $\K$-vector space of finite dimension $n$, 
equipped with a scalar product (meaning Hermitian if $\K = \C$), denoted $\langle-,-\rangle$ and
simply $|-|$ for the associated norm.

A $\K$-endomorphism $L\in Hom(F,F)$ is normal, if the adjoint $L^*$ of $L$ commutes with $L$, where 
$L^*$ is uniquely defined by the following condition
$$
\forall \, \bu,\, \bv \, \in F\; \mbox{ we have } \;\langle L\bu,\bv\rangle = \langle \bu,L^*\bv\rangle.
$$
Let $\Sp(L)$ be the (complex) spectrum of the normal operator $L:F\to F $. Thus:
(i) $\lbd \in \Sp(L)$ if and only if $\bar\lbd\in\Sp(L^*)$; and (ii) Let $\lbd \in \Sp(L)$. 
Then $\ker(L - \lbd Id) = \ker(L^* -\bar\lbd Id)$.
Normal endomorphisms and their adjoint which are diagonalizable, are simultaneously so in an 
orthonormal/unitary
basis of eigen-vectors: Any complex normal endomorphism is diagonalizable in the unitary group.

\bigskip
Let $V$ be the real Euclidean space of dimension $n$. Given $L:V\to V$ a normal endomorphism, we recall 
$$
V = R \oplus C
$$ 
where $R$ and $C$ are orthogonal subspaces of $E$ such that:
(i) The subspaces $R$ and $C$ are invariant by $L$;
(ii) The restriction $L|R:R\to R\subset V$ is diagonalizable, once $R$ is not the null space;
(iii) The restriction $L|C:C \to C\subset V$ is an isomorphism of $C$ with no real eigen-values, 
once $C$ is not reduced to the null space. We call $C$ \em the anti-real characteristic space of $L$. \em
Whenever $C$ is not the null space, it decomposes as an orthogonal direct sum
$$
C = C_1 \oplus \ldots \oplus C_t
$$
of vector subspaces $C_1,\ldots,C_t$ of dimension $2$ invariant by $L$, 
such that the restriction $L|C_i$ is a \em similitude: \em a composition of a rotation and a 
dilation.
Any such a plane $C_i$ is called an \em anti-real characteristic plane \em (\em ARCP for short\em )
of $L$. In particular we can write 
\begin{center}
$
L = A + B, \;$ where $\;A:=\frac{L+L^*}{2}\;$ and $\; B = \frac{L-L^*}{2}
$
\end{center}
so that 
\begin{center}
$
A= A^* \;$ and $\; B^* = - B. 
$
\end{center}
The eigen-values of $B$ are pure imaginary complex numbers and square roots of 
those of $B^2 = - BB^*$. 
Any eigen-space $W$ of $BB^*$ is contained in an eigen-space of $A$ and there exist real numbers $a,b$ such that
\begin{center}
$
A|W = a Id_W \; \; {\rm and } \;\; BB^*|W = -b^2 Id_W
$
\end{center}
and thus $a+\sqrt{-1}b$ and $a-\sqrt{-1}b$ are complex eigen-values of (the complexification of) $L$.

\smallskip
 We define the following operators $J$ and the symmetric operator $B_2:=J(B\oplus B)$:

\smallskip
\begin{center}
$
\begin{array}{ccc}
J  :  V\oplus V & \to & V \oplus V \\
\;\;\;\;\;\; \bu\oplus\bv & \to & (-\bv) \oplus \bu
\end{array}
$
\;\;
$
\begin{array}{ccc}
B_2   :  V\oplus V & \to & V \oplus V \\
\;\;\; \;\;\;\; \bu\oplus\bv & \to & (-B\bv) \oplus B\bu
\end{array}
$
\end{center}

We recall
\begin{lemma}\label{lem:plane-invariant} 
1) $b \in \Sp(B_2)$ if and only if $-b\in\Sp (B_2)$ if and only if
$-b^2\in \Sp (BB^*)$.

\smallskip\noindent
2) Let $b \neq 0$ be an eigen value of $B_2$ and let $E_b$ be the corresponding eigen-space, and let 
$\bu\oplus\bv \in E_b\setminus 0\oplus 0$. The following hold true:

(i) $\bu,\bv$ are orthogonal and the plane $\R\bu + \R\bv$ of $V$ is invariant by $B$

(ii) $\bv\oplus\bu \in E_{-b}$;

(iii) $J(\bu\oplus\bv) = (-\bv)\oplus \bu \in E_b$; 

(iv) $JE_b = E_b$.
\end{lemma}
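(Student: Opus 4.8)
The plan is to deduce everything from a handful of structural identities for the operators $J$ and $B_2=J(B\oplus B)$ on $V\oplus V$, endowed with the product scalar product. First, $J^2=-Id_{V\oplus V}$, so $J$ is a complex structure and in particular invertible. Second, $J$ commutes with $B\oplus B$, hence $B_2=J\circ(B\oplus B)=(B\oplus B)\circ J$ commutes with $J$, and
$$
B_2^2\;=\;J^2\,(B\oplus B)^2\;=\;-(B^2\oplus B^2)\;=\;(BB^*)\oplus(BB^*),
$$
using $B^*=-B$. Third, a short computation with $B^*=-B$ confirms that $B_2$ is symmetric for the product scalar product (as already recorded when $B_2$ was introduced); in particular its spectrum is real and invariant under the factorization arguments used below. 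Fourth, writing $T:\bu\oplus\bv\mapsto\bv\oplus\bu$ for the swap involution, a direct check gives $B_2T=-TB_2$.

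For part 1: the equivalence $b\in\Sp(B_2)\Longleftrightarrow-b\in\Sp(B_2)$ is immediate from $B_2T=-TB_2$ and the invertibility of $T$ (conjugating an eigenvector for $b$ by $T$ yields a nonzero eigenvector for $-b$); moreover $T$ carries $E_b$ bijectively onto $E_{-b}$, which will be reused in part 2. For the link with $BB^*$: from $B_2^2=(BB^*)\oplus(BB^*)$ one has $\Sp(B_2^2)=\Sp(BB^*)$; since the commuting factors of $B_2^2-b^2\,Id=(B_2-b\,Id)(B_2+b\,Id)$ make $b^2\in\Sp(B_2^2)$ equivalent to ``$b\in\Sp(B_2)$ or $-b\in\Sp(B_2)$'', and the latter collapses to $b\in\Sp(B_2)$ by the first equivalence, we get $b\in\Sp(B_2)\Longleftrightarrow b^2\in\Sp(BB^*)$, which is the asserted condition once the eigenvalues of $BB^*$ are recorded as $-b^2$ with the sign fixed just before the lemma (via $B^2=-BB^*$).

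For part 2: fix $b\neq0$ and $\bu\oplus\bv\in E_b\setminus\{0\oplus0\}$; unwinding $B_2(\bu\oplus\bv)=b\,(\bu\oplus\bv)$ produces the two relations $B\bu=b\bv$ and $B\bv=-b\bu$. If $\bu=0$ then $b\bv=0$ forces $\bv=0$, a contradiction; symmetrically $\bv\neq0$, so $\bu,\bv$ are both nonzero. (i) Skew-symmetry of $B$ gives $\langle B\bu,\bu\rangle=0$, hence $b\,\langle\bv,\bu\rangle=\langle B\bu,\bu\rangle=0$ and, as $b\neq0$, $\bu\perp\bv$; being nonzero and orthogonal, $\bu$ and $\bv$ are independent, so $\R\bu+\R\bv$ is genuinely a plane, and it is $B$-invariant since $B\bu=b\bv$ and $B\bv=-b\bu$ both lie in it. (ii) $B_2(\bv\oplus\bu)=(-B\bu)\oplus(B\bv)=(-b\bv)\oplus(-b\bu)=-b\,(\bv\oplus\bu)$, so $\bv\oplus\bu\in E_{-b}$; alternatively, $\bv\oplus\bu=T(\bu\oplus\bv)\in E_{-b}$ by part 1. (iii) $B_2\big((-\bv)\oplus\bu\big)=(-B\bu)\oplus(-B\bv)=(-b\bv)\oplus(b\bu)=b\,\big((-\bv)\oplus\bu\big)$, so $J(\bu\oplus\bv)\in E_b$; this also follows at once from $J$ commuting with $B_2$. (iv) Since $J$ commutes with $B_2$ it preserves $E_b$, and being invertible it maps $E_b$ onto $E_b$.

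There is no real obstacle here: the proof is bookkeeping with the defining relations. The one place deserving a little care is the spectral step in part 1 — one must use the symmetry of $B_2$ (through the factorization of $B_2^2-b^2\,Id$ and the sign-invariance of $\Sp(B_2)$) to obtain both implications between $\Sp(B_2)$ and $\Sp(B_2^2)$ — and one should reconcile the sign in the clause ``$-b^2\in\Sp(BB^*)$'' with the identity $B^2=-BB^*$ fixed just above the lemma.
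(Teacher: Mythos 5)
Your proof is correct; note that the paper offers no proof of this lemma at all (it is introduced with ``We recall''), so there is nothing to compare against --- your direct verification from the identities $J^2=-Id$, $J(B\oplus B)=(B\oplus B)J$, $B_2T=-TB_2$ and the componentwise relations $B\bu=b\bv$, $B\bv=-b\bu$ is exactly the intended elementary bookkeeping. Your closing caveat about the sign is well taken and worth stating plainly: since $B^*=-B$, the operator $BB^*=-B^2$ is positive semi-definite, so the clause ``$-b^2\in\Sp(BB^*)$'' in the statement (and the line ``$BB^*|W=-b^2 Id_W$'' just above it) can only be read as ``$-b^2\in\Sp(B^2)$'', equivalently ``$b^2\in\Sp(BB^*)$'', which is precisely what your computation $B_2^2=(BB^*)\oplus(BB^*)$ together with the factorization of $B_2^2-b^2\,Id$ delivers.
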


A normal endomorphism $L:F\to F$ is \em reduced \em in the basis $\cB = (\bb_1,\ldots,\bb_n)$ of $F$, 
if it consists of orthonormal/unitary characteristic vectors of $L$, that is 
diagonalizes $L$ if $\K = \C$, and if $\K = \R$ it is a basis corresponding to the orthogonal 
direct sum $F = R\oplus C_1\oplus\ldots\oplus C_s$ such that $L|R$ is diagonal in the basis 
$(\bb_1,\ldots,\bb_r)$ of $R$, and each $C_i$ is an ARCP.
%
%
%
%
%
%
%
%
%
%
%
%
%
%
%
%
%
%
%
%
%
%
%
%
%
%
%
\section{Some very elementary polynomial algebra}\label{section:SVEPA}
Let $F$ be a $\K$-vector space of dimension $n$. Let $E_1,\ldots,E_s$ be pairwise distinct non-trivial vector subspaces of $F$,
that is each $E_i$ is neither the null-space nor is $F$.

\smallskip
The union $E :=E_1\cup\ldots\cup E_s$ is called a \em bouquet of subspaces \em of $F$.

\medskip
Let $SF^\vee$ be the $\K$-algebra of polynomials over $F$ (isomorphic to $\K[X_1,\ldots,X_n]$
$:=\K[X]$) 
and let $S^2F^\vee$ be the vector space of quadratic polynomials of $F$ (which
we identify with $H_2[X]\subset\K[X]$, the $\K$-vector subspace of $\K[X]$
consisting only of quadratic polynomials).

\bigskip
Let us consider the subset 
$$
S_k^n:=\{(e_1,\ldots,e_k) \in \{1,\ldots,n\}^k \;:\; e_1\leq \ldots \leq e_k \; \mbox{ and } \;\sum_1^k e_i = n\}.
$$
Given $\be \in S_k^n$ is associated a unique partition of $\{1,\ldots,n\}$, namely 
$n_\be = \{n_\be^1,\ldots,n_\be^k\}$
for 
$$
n_\be^i := \{j \; : \; 1+e_1 + \ldots e_{i-1} \leq j \leq e_1+ \ldots +  e_i\}
$$ 
with the convention $e_0 := 0$.

\medskip
Let $\bx =(x_1,\ldots,x_n)$ be a linear system of coordinates in $F$, and let 
$E_1,\ldots,E_s$ be $s\geq 2$ coordinates subspaces which are in direct sum
$$
E_1\oplus \ldots \oplus E_s = F.
$$
Let $e_i$ be the dimension of $E_i$. 
Denoting $e_0:=0$, we can assume that 
$$
E_i := \{(0,\ldots,0,x_{1+e_1+\ldots+e_{i-1}}, \ldots, x_{e_1+\ldots+e_i},0,\ldots,0)\in F: x_k\in\K\}
$$

Let $J_i$ be the ideal of $SF^\vee$ of polynomials vanishing onto $E_i$, that is 
$J_i = (X_j)_{j\notin n_\be^i}$.

\medskip
Let $E :=\cup_{i=1}^s E_i$ and let $J$ be the ideal of $SF^\vee$, 
vanishing along the bouquet $E$. Let $\be\in S_k^n$ corresponding to the dimensions of $E_1,\ldots,E_k$
(after permutation is necessary). We do have the elementary 
\begin{lemma}\label{lem:elementary}
(i) Let $J = \cap_{i=1}^s J_i$.

\smallskip\noindent
(ii) $J$ is minimally generated by all the products $x_ix_j$, with $i<j$ and 
$(i,j)\in \cup_{1\leq l<m\leq s} \, n_\be^l \times n_\be^m$ 

\smallskip\noindent
(iii)  We have 
\begin{equation}\label{eq:dimJ}
\dim J\cap S^2F^\vee = \sum_{j=1}^{s-1} e_j(e_{j+1}+\ldots + e_s) \,
= \,\sum_{1\leq i<j\leq s}\; \dim E_i\cdot \dim E_j \;\leq \frac{n(n-1)}{2}.
\end{equation}
\end{lemma}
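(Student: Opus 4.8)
The plan is to prove all three parts by direct computation in the given coordinate system, exploiting that each $E_i$ is a coordinate subspace so that the ideals $J_i$ are monomial. First I would establish (i). The inclusion $J \subseteq \cap_i J_i$ is immediate since a polynomial vanishing on all of $E = \cup_i E_i$ vanishes on each $E_i$, hence lies in each $J_i$. For the reverse inclusion, $\cap_i J_i$ is the ideal of polynomials vanishing on $\cup_i E_i = E$ (the variety of an intersection of ideals is the union of the varieties), and since each $E_i$ is a coordinate subspace, $J_i = (X_j : j \notin n_\be^i)$ is already radical, so $\cap_i J_i$ is radical and therefore equals $I(V(\cap_i J_i)) = I(E) = J$. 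Alternatively one checks directly that a polynomial vanishing on every $E_i$ must have every monomial supported outside at least... more carefully: I would argue that $f \in \cap J_i$ means $f$ vanishes on each $E_i$, and since the $E_i$ are coordinate subspaces spanning $F$, writing $f$ in monomials $X^\alpha$, the restriction $f|_{E_i}$ keeps exactly the monomials supported on $n_\be^i$; vanishing on $E_i$ forces those to cancel, and doing this for all $i$ leaves only monomials whose support meets at least two distinct blocks $n_\be^l, n_\be^m$. This simultaneously proves (ii)'s generating set.

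Next, for (ii), building on the monomial description just obtained: the ideal $J = \cap_i J_i$ is a monomial ideal, and a monomial $X^\alpha$ lies in $J$ iff $X^\alpha \in J_i$ for all $i$, i.e.\ for each $i$ the support of $\alpha$ is not contained in $n_\be^i$. Equivalently, the support of $\alpha$ meets at least two distinct blocks. The minimal such monomials are exactly the products $x_i x_j$ with $i \in n_\be^l$, $j \in n_\be^m$, $l < m$ (and we may take $i < j$), since any monomial whose support meets two blocks is divisible by such a product, and none of these quadratic monomials divides another. Minimality of the generating set follows because a monomial ideal has a unique minimal monomial generating set consisting of the minimal elements of its monomial set under divisibility, and I would verify that each $x_i x_j$ as above is not a multiple of any other, nor of any variable.

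For (iii), I would simply count the quadratic generators listed in (ii): $J \cap S^2 F^\vee$ has as a basis precisely the monomials $x_i x_j$ with $i \in n_\be^l$, $j \in n_\be^m$ for some $l < m$ (note no quadratic monomial $x_i^2$ or $x_i x_j$ with $i,j$ in the same block lies in $J$). The number of such pairs is $\sum_{1 \le l < m \le s} |n_\be^l| \cdot |n_\be^m| = \sum_{1 \le i < j \le s} \dim E_i \cdot \dim E_j = \sum_{j=1}^{s-1} e_j(e_{j+1} + \cdots + e_s)$, giving the stated equalities. For the final inequality, since $\sum_i e_i = n$ one has $2\sum_{i<j} e_i e_j = (\sum_i e_i)^2 - \sum_i e_i^2 = n^2 - \sum_i e_i^2 \le n^2 - n$ (as each $e_i \ge 1$ forces $\sum e_i^2 \ge \sum e_i = n$), hence $\sum_{i<j} e_i e_j \le \tfrac{n(n-1)}{2}$.

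I do not expect a genuine obstacle here; the statement is labelled "elementary" and the only subtlety is being careful that vanishing-on-a-coordinate-subspace exactly kills the monomials supported inside that block and nothing else — i.e.\ that the $J_i$ are radical monomial ideals so that taking the intersection and taking the radical of the ideal of the union agree. Once the monomial bookkeeping in (i)–(ii) is set up cleanly, (iii) is pure counting plus the elementary symmetric-function identity above.
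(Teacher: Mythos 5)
Your proof is correct and complete. The paper's own proof is only a two-line remark -- it records that each $J_i$ is generated by $(X_k)_{k\notin n_\be^i}$ and then asserts that ``an induction on $s$ (or on $n-s$) will do'' -- so your direct, non-inductive monomial computation is a genuinely more explicit route to the same elementary facts. Both arguments rest on the same key observation (each $J_i$ is the radical monomial ideal of the variables outside its block, so $J=\cap_i J_i$ is a monomial ideal whose monomials are exactly those whose support meets at least two blocks); you then read off the minimal generators and count the quadratic ones directly, where the paper would presumably peel off one block at a time. What your version buys is that the minimal generating set and the dimension count in (iii) fall out explicitly rather than being buried in an induction the paper never writes down. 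One small remark: in (i) your first argument via ``$\cap_i J_i$ is radical, hence equals $I(V(\cap_i J_i))$'' invokes a Nullstellensatz-type statement that over $\K=\R$ requires care; but this detour is unnecessary, since $J=I(\cup_i E_i)=\cap_i I(E_i)=\cap_i J_i$ holds tautologically for vanishing ideals, and your ``alternative'' argument by restricting monomials to each coordinate block is exactly the clean way to see it over any infinite field. The identity $\sum_{j=1}^{s-1}e_j(e_{j+1}+\cdots+e_s)=\sum_{i<j}e_ie_j$ and the bound via $2\sum_{i<j}e_ie_j=n^2-\sum_ie_i^2\le n^2-n$ are both correct.
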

\begin{proof}
Each ideal $J_i$ is generated by $(X_k)_{k\notin n_\be^i}$.
An induction on $s$ (or on $n-s$) will do.
\end{proof}
\begin{corollary}\label{cor:elementary}
Let be two direct sum decompositions of $\K^n$ of the following form:
$$
\K^n = E_1\oplus\ldots\oplus E_r\oplus F_1\oplus \ldots \oplus F_s
\;\;
{\rm and }
\;\;
\K^n = E_1'\oplus\ldots\oplus E_r'\oplus G_1\oplus \ldots \oplus G_t
$$
such that: 
(i) $r,s,t \geq 1$ with $t<s$.
\\
(ii) $\dim E_i = \dim E_i'$;
\\
(iii) there exist a $t$-tuple $(d_1,\ldots,d_t) \in \N_{\geq 1}^t$ such that $d_1 + \ldots + d_t = s$ and 
a partition $\sqcup_{k=1}^t \cA_k$ of $\{1,\ldots,s\}$ with $|\cA_k| = d_k\geq 1$ 
such that for each $k=1,\ldots,t,$ we have 
$$
\dim G_k = \sum_{j\in \cA_k} \dim F_j.
$$

Let $J$ be the ideal of $SF^\vee$ of polynomials vanishing over $(\cup_iE_i)\cup(\cup_jF_j)$ and 
$J'$ be the ideal of polynomials vanishing over $(\cup_iE_i')\cup(\cup_k G_k)$.
Then 
$$
\dim J'\cap S^2F^\vee < \dim J \cap S^2F^\vee
$$
\end{corollary}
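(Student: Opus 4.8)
The plan is to compute both dimensions via formula \eqref{eq:dimJ} of Lemma \ref{lem:elementary} and then show that \emph{coarsening} a partition of the summands strictly decreases the relevant quadratic count. Write $a_i := \dim E_i = \dim E_i'$ for $i=1,\dots,r$, and $f_j := \dim F_j$ for $j=1,\dots,s$, so that $\dim G_k = \sum_{j\in\cA_k} f_j =: g_k$. Set $n := \sum_i a_i + \sum_j f_j = \sum_i a_i + \sum_k g_k$. Since the ideals in question are the vanishing ideals of the bouquets and both decompositions are direct sums, Lemma \ref{lem:elementary}(iii) gives
$$
\dim J\cap S^2F^\vee = \sum_{1\leq i<i'\leq r} a_i a_{i'} + \sum_{i=1}^r\sum_{j=1}^s a_i f_j + \sum_{1\leq j<j'\leq s} f_j f_{j'}
$$
and the analogous identity for $J'$ with the $f_j$ replaced by the $g_k$ and $s$ by $t$. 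The first double sum $\sum_{i<i'} a_i a_{i'}$ is common to both. The middle sum is also common, because $\sum_j f_j = \sum_k g_k$, hence $\sum_i\sum_j a_i f_j = (\sum_i a_i)(\sum_j f_j) = (\sum_i a_i)(\sum_k g_k) = \sum_i\sum_k a_i g_k$. Therefore the difference reduces to
$$
\dim J\cap S^2F^\vee - \dim J'\cap S^2F^\vee = \sum_{1\leq j<j'\leq s} f_j f_{j'} - \sum_{1\leq k<k'\leq t} g_k g_{k'}.
$$

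Now I would rewrite each side using the elementary identity $2\sum_{a<b} u_a u_b = (\sum_a u_a)^2 - \sum_a u_a^2$. Since $\sum_j f_j = \sum_k g_k =: \Sigma$, the squared-sum terms cancel, and we get
$$
2\bigl(\dim J\cap S^2F^\vee - \dim J'\cap S^2F^\vee\bigr) = \sum_{k=1}^t g_k^2 - \sum_{j=1}^s f_j^2 = \sum_{k=1}^t\Bigl(\bigl(\sum_{j\in\cA_k} f_j\bigr)^2 - \sum_{j\in\cA_k} f_j^2\Bigr) = \sum_{k=1}^t\; 2\!\!\sum_{\substack{j<j'\\ j,j'\in\cA_k}} f_j f_{j'}.
$$
Every term $f_j f_{j'}$ with $j,j'$ in a common block $\cA_k$ is strictly positive because each $f_j = \dim F_j \geq 1$ (the $F_j$ are non-trivial summands). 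So the right-hand side is a sum of non-negative terms, and it is strictly positive as soon as \emph{some} block $\cA_k$ contains at least two indices. But $\sum_k d_k = s$ with $t < s$ forces at least one $d_k = |\cA_k| \geq 2$ by pigeonhole. Hence the difference is strictly positive, which is the claim.

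The only mild subtlety is bookkeeping: one must make sure the index ordering implicit in Lemma \ref{lem:elementary} (the summands are listed so that \eqref{eq:dimJ} applies, possibly after the permutation mentioned before the lemma) does not affect the final symmetric expressions — it does not, since all the sums $\sum_{i<j} \dim E_i \dim E_j$ appearing in \eqref{eq:dimJ} are symmetric in the summands and therefore independent of the chosen ordering. Apart from that, the argument is the chain of three identities above followed by the pigeonhole observation; there is no real obstacle.
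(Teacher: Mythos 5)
Your proof is correct and follows exactly the route the paper intends: the paper's own proof of this corollary is the one-line remark that it is ``an elementary computation from Equation (\ref{eq:dimJ})'', and your argument is precisely that computation carried out in full (splitting the pair-sum into $E$--$E$, $E$--$F$, and $F$--$F$ contributions, cancelling the first two, and using $2\sum_{a<b}u_au_b=(\sum u_a)^2-\sum u_a^2$ plus pigeonhole on $t<s$). No gaps; the only implicit ingredient is the standing convention of Section \ref{section:SVEPA} that all summands are non-trivial, which guarantees each $f_j\geq 1$.
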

\begin{proof}
It is an elementary computation from Equation (\ref{eq:dimJ}).
\end{proof}
If $K$ is any ideal of $SF^\vee$, 
let $K_2:=K\cap S^2F^\vee$ be the vector subspace of the quadratic polynomials contained
in $K$. Let $d_2(K)$ be the dimension of $K_2$.
We can now state the following
\begin{lemma}\label{lem:elementary-equal}
Let $J$ as in Lemma \ref{lem:elementary}. Let $K$ be an ideal of $SF^\vee$ 
generated by quadratic polynomials and such that $d_2(K) = d_2(J)$.
If $K$ vanishes along $E$ then $K=J$.
\end{lemma}
\begin{proof}
The ideal $J$ is generated by $d_2(J)$ quadratic monomials.
Since it is reduced, it contains $K$, so that any quadratic generators of 
$K$ is a linear combination of the monomials generating $J$, since $K_2 \subset J_2$.
But they have the same dimension so $J_2 = K_2$ and then $K = J$.
\end{proof}
The following Lemma will be quite useful later in the paper.
\begin{lemma}\label{lem:elementary-coincides}
Let $J$ as in Lemma \ref{lem:elementary}. Let $K$ be an ideal of $SF^\vee$ 
generated by quadratic polynomials.
If the vanishing locus $V(K)$ of $K$ is exactly
the vanishing locus $V(J)$ of $J$, then $K=J$.  
\end{lemma}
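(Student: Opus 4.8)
The plan is to reduce the statement to Lemma~\ref{lem:elementary-equal} by showing that the hypothesis $V(K)=V(J)$ forces $d_2(K)=d_2(J)$. First I would recall that $J$ is a reduced ideal: indeed $J=\cap_i J_i$ by Lemma~\ref{lem:elementary}(i), and each $J_i=(X_j)_{j\notin n_\be^i}$ is prime, hence $J$ is radical. Therefore $J=I(V(J))=I(V(K))$, the ideal of all polynomials vanishing on the common zero set. In particular $K\subseteq J$, so $K_2\subseteq J_2$ and $d_2(K)\le d_2(J)$; it remains to prove the reverse inequality, since then Lemma~\ref{lem:elementary-equal} applies verbatim and gives $K=J$.

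For the reverse inequality the key point is that $V(K)=V(J)=E=\cup_i E_i$ and that $K$ is generated by quadratic polynomials. I would argue that every quadratic monomial $X_iX_j$ with $(i,j)\in n_\be^l\times n_\be^m$, $l<m$ — which Lemma~\ref{lem:elementary}(ii) says minimally generates $J$ — already lies in $K_2$, so $J_2\subseteq K_2$ and hence $d_2(K)\ge d_2(J)$. To see this, fix such a pair $(i,j)$ and consider the restriction of the generating quadrics of $K$ to each coordinate subspace $E_l$: since $E_l\subseteq V(K)$, every generator of $K$ restricts to $0$ on $E_l$; writing a generator $q=\sum_{a\le b} c_{ab}X_aX_b$, vanishing on $E_l=\{X_a=0:a\notin n_\be^l\}$ forces $c_{ab}=0$ whenever both $a,b\in n_\be^l$. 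Doing this for all $l$, every generator of $K$ — and hence $K_2$ — is spanned by monomials $X_aX_b$ with $a,b$ in \emph{different} blocks of the partition $n_\be$, i.e. $K_2\subseteq J_2$; this re-derives $K\subseteq J$ from the vanishing alone. The content, then, is the \emph{surjectivity} onto all such cross-block monomials: if some $X_iX_j$ with $i\in n_\be^l$, $j\in n_\be^m$ ($l\ne m$) were not in $K_2$, then $K_2$ would lie in a proper subspace of $J_2$, and I would exhibit a point outside $E$ on which all of $K$ vanishes — e.g. a point $p$ with $p_i,p_j\ne 0$, all other coordinates zero chosen generically in the remaining cross-monomials, contradicting $V(K)=E$. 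Concretely, take $p$ supported on $\{i,j\}$: every cross-block monomial other than $X_iX_j$ vanishes at $p$, so if $X_iX_j\notin K_2$ then every generator of $K$ vanishes at $p$ while $p\notin E$ (as $p$ has nonzero entries in two distinct blocks), a contradiction.

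This yields $J_2\subseteq K_2$, hence $d_2(K)=d_2(J)$, and Lemma~\ref{lem:elementary-equal} finishes the proof. The main obstacle is the last step — checking that the support-$\{i,j\}$ point $p$ genuinely lies outside $E=\cup E_i$ and that all generators of $K$ vanish there — but this is immediate: $p\in E$ would mean $p\in E_r$ for some single block $n_\be^r$, impossible since $p$ has nonzero coordinates indexed in two distinct blocks $n_\be^l\ne n_\be^m$; and all generators of $K$ vanish at $p$ because, by the restriction argument above, they are sums of cross-block monomials, each of which involves some variable $X_a$ with $a\notin\{i,j\}$ and hence vanishes at $p$, except possibly $X_iX_j$ itself, which is excluded by assumption. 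So no genuine difficulty remains beyond bookkeeping with the partition $n_\be$.
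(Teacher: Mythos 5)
Your reduction to Lemma~\ref{lem:elementary-equal} is sound in its first half: $J$ is an intersection of monomial primes, hence radical, so $V(K)=V(J)$ gives $K\subseteq I(V(J))=J$ and $K_2\subseteq J_2$; and the restriction-to-$E_l$ computation correctly shows that every quadratic generator of $K$ is a linear combination of cross-block monomials. The gap is in the reverse inclusion $J_2\subseteq K_2$. From the assumption $X_iX_j\notin K_2$ you cannot conclude that every generator of $K$ vanishes at the point $p$ supported on $\{i,j\}$: a generator may carry $X_iX_j$ with a nonzero coefficient alongside other cross-block monomials, without $X_iX_j$ itself lying in the span $K_2$. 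What evaluation at $p$ actually yields is only that the linear functional ``coefficient of $X_iX_j$'' is not identically zero on $K_2$, which is strictly weaker than $X_iX_j\in K_2$ and does not force $K_2=J_2$.

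The failure is not merely cosmetic. Take $n=4$, $E_1=\{X_3=X_4=0\}$, $E_2=\{X_1=X_2=0\}$, so that $J=(X_1X_3,X_1X_4,X_2X_3,X_2X_4)$, and let $K=(q_1,q_2)$ with $q_1=X_1X_3+X_2X_4$ and $q_2=X_1X_4-X_2X_3$. Every cross-monomial coefficient functional is nonzero on $K_2$, so your test at the points $p$ detects no contradiction, yet $\dim K_2=2<4=d_2(J)$. Worse, $q_1^2+q_2^2=(X_1^2+X_2^2)(X_3^2+X_4^2)$, so over $\K=\R$ one has $V(K)=E_1\cup E_2=V(J)$ while $K\neq J$; since your argument nowhere uses $\K=\C$, it cannot be correct as written (and the real form of the statement itself requires separate care, e.g.\ via the quadrics actually arising from $Q_L$). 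For comparison, the paper argues by decomposing $J_2=K_2\oplus L_2$, observing $V(J)=V(K)\cap V(L)$, and appealing to the minimality of the listed monomial generators of $J$. Whichever route one takes, the step that excludes a proper subspace $K_2\subsetneq J_2$ with the same zero locus is exactly where the content lies, and it is missing from your proof.
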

\begin{proof}
Since the ideal $J$ is reduced, we deduce that $K \subset J$.
It is sufficient to show that $K\cap S^2F^\vee= J\cap S^2F^\vee$.
In particular $K_2$ is a vector subspace of $J_2$. 
Assume that $J_2 = K_2 \oplus L_2$ for $L_2$ a subspace of $S^2F^\vee$ of positive
dimension. If $L$ denotes the ideal of $SF^\vee$ generated by $L_2$ we find
$$
V(J) = V(K) \cap V(L)
$$
implying that $V(L) \cap V(J) = V(K)$, by hypothesis.
If the latter assumption were  true it would contradict the minimality 
of the generators of $J$ explicitly listed  above. 
\end{proof}
Our interest in these elementary results lies the following
\begin{proposition}\label{prop:diagonal-equivalent-conditions}
Let $A \in M_n(\K)$ be a $n\times n$ matrix with $\K$-entries. 

\medskip
1) Assume that $A$ is not
a multiple of the identity. Let $E_1,\ldots,E_s$ be the eigen-~spaces of $A$, and let $J$ 
be the ideal of polynomials of $\K[X]$ vanishing over $\cup_{i=1}^s E_i$. 

\smallskip
The following conditions are equivalent:

(i) $A$ is diagonalizable; 

(ii) $J$ is generated by quadratic polynomials;

(iii) The ideal generated by $J_2:=J\cap H_2[X]$ is equal to $J$;

(iv) The ideal generated by $J_2$ is reduced.

\medskip
2) If $A$ is a multiple of the identity, then the ideal $J$ is the null ideal.
\end{proposition}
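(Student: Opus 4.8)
The plan is to treat part 2) first, as it is immediate, and then establish the equivalences in part 1) by a cycle of implications together with one direct argument. For part 2): if $A = cId$ then every nonzero vector is an eigen-vector, so the only eigen-space is $F$ itself, which is excluded from being a ``non-trivial subspace''; hence there are no $E_i$ in the sense of Section \ref{section:SVEPA}, the bouquet $E$ is empty (or one takes the convention $V(J) = F$), and the ideal of polynomials vanishing on all of $F$ is $(0)$. This is the degenerate case the hypothesis ``$A$ not a multiple of identity'' in part 1) is designed to rule out.

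For part 1), first I would record the structural fact underlying everything: when $A$ is diagonalizable, $F = E_1 \oplus \ldots \oplus E_s$ is a direct sum of the eigen-spaces, and after choosing a linear system of coordinates adapted to this decomposition we are exactly in the situation of Lemma \ref{lem:elementary}. Thus Lemma \ref{lem:elementary}(ii) tells us $J$ is minimally generated by the quadratic monomials $x_i x_j$ with $i,j$ in distinct blocks $n_{\be}^l, n_{\be}^m$. This gives $(i) \Rightarrow (ii)$ at once, and in fact $(i) \Rightarrow (iii)$ since those quadratic generators lie in $J_2$ and generate $J$; and $(i) \Rightarrow (iv)$ since $J$ itself is a reduced (radical) ideal, being the ideal of a (Zariski-closed) union of linear subspaces. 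Then $(iii) \Rightarrow (iv)$ is trivial (the ideal generated by $J_2$ equals $J$, and $J$ is reduced). So the substance is to close the cycle with $(ii) \Rightarrow (i)$ and $(iv) \Rightarrow (i)$, or more economically to prove $(ii) \Rightarrow (i)$ and $(iv) \Rightarrow (ii)$ (or $(iv) \Rightarrow (iii) \Rightarrow (ii)$).

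The key implication — and the main obstacle — is showing that if the defining ideal $J$ of the eigen-bouquet is generated by quadratics (or has its radical/the ideal generated by $J_2$ behaving well), then $A$ must be diagonalizable. I would argue by contraposition: suppose $A$ is \emph{not} diagonalizable. Then some eigen-space $E_i$ is properly contained in the corresponding generalized eigen-space (characteristic space) $\wt{E}_i$, and the sum $E_1 + \ldots + E_s$ is a \emph{proper} subspace of $F$, not a direct-sum decomposition of $F$. One then has to see that $J$, the ideal of this proper union of linear subspaces, \emph{cannot} be generated by quadratic polynomials. The cleanest way is to pass to a coordinate system in which $E := \cup E_i$ sits inside a proper coordinate subspace $H = \{x_{m+1} = \ldots = x_n = 0\}$ with $E$ not equal to $H$; the linear forms $x_{m+1}, \ldots, x_n$ all lie in $J$, so $J$ contains honest linear polynomials, and the quadratic part $J_2$ then can only recover the ideal $(x_{m+1}, \ldots, x_n)$ plus possibly further quadrics — but the ideal generated by $J_2$ would then have vanishing locus contained in $\{x_{m+1} = \ldots = x_n = 0\} = H \supsetneq E$, so it is strictly larger than $J$ unless it actually cuts out $E$, which would force those extra quadrics to carve $E$ out of $H$; iterating, one reduces to the case where the $E_i$ genuinely span their ambient space and invokes Lemma \ref{lem:elementary}, getting that the minimal generators are the monomials and in particular $J$ (restricted to $H$) is generated by quadrics only if $E_1 \oplus \ldots \oplus E_s = H$, i.e. only in the diagonalizable situation. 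The subtle point to get right is the interplay between ``$J$ generated by quadratics'' and the presence of linear forms in $J$ whenever $\cup E_i$ is not full: one must argue that a linear form $\ell \in J$ forces $\ell$ to divide every generator's contribution transversally, and a careful bookkeeping via Lemma \ref{lem:elementary-coincides} (equality of ideals from equality of vanishing loci, for ideals generated by quadratics) lets us conclude $K := (J_2) = J$ forces $V(J)$ to be cut out by quadrics, contradicting $V(J) \subsetneq H$ with $H$ a hyperplane-type linear space. I expect assembling this contradiction — handling the mixed linear-and-quadratic generation carefully rather than hand-waving — to be where the real care is needed; the forward implications from $(i)$ are essentially bookkeeping on top of Lemma \ref{lem:elementary}.
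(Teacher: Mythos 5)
Your overall architecture is right and is essentially what the paper intends (its own proof is a one-line appeal to Lemmas \ref{lem:elementary-equal} and \ref{lem:elementary-coincides}): part 2) is immediate, the implications out of (i) follow from Lemma \ref{lem:elementary}, (ii)$\Rightarrow$(iii)$\Rightarrow$(iv) are trivial, and the reverse implications go by contraposition from the fact that non-diagonalizability forces $W:=E_1+\cdots+E_s$ to be a proper subspace of $\K^n$. One small correction to part 2): if $A=c\,\mathrm{Id}$ the unique eigen-space is all of $F$, so the bouquet is $F$ itself and $J=I(F)=(0)$; your alternative reading that the bouquet is ``empty'' would give $J=\K[X]$, the opposite conclusion.

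The place where your argument does not close is exactly the step you flag. The iterated reduction inside the coordinate subspace $H$, and the appeal to Lemma \ref{lem:elementary-coincides}, cannot work as stated: that lemma assumes the $E_i$ are in direct sum spanning the ambient space, which is precisely what fails in the non-diagonalizable case, and ``$\ell$ divides every generator's contribution transversally'' is not an argument. The clean finish is a degree count. Since $\cup_i E_i$ is a cone, $J$ is homogeneous; if $W\subsetneq\K^n$, pick a nonzero linear form $\ell$ vanishing on $W$, so $\ell\in J$. Any ideal generated by elements of $H_2[X]$ is homogeneous with zero graded piece in degree $1$, hence contains no nonzero linear form; therefore $J$ is not generated by quadratics and $(J_2)\neq J$, which disposes of (ii) and (iii). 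For (iv): $\ell^2\in J\cap H_2[X]=J_2$, so $\ell\in\sqrt{(J_2)}$ while $\ell\notin(J_2)$ for the same degree reason, so $(J_2)$ is not reduced. This one observation replaces the entire iterative reduction; the forward implications are fine as you state them.
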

\begin{proof}
Obvious using the previous two Lemmas.
\end{proof}
%
%
%
%
%
%
%
%
%
%
%
%
%
%
%
%
%
%
%
%
\section{Some linear and multi-linear algebra}\label{section:SLAMA}
Let $F$ be a $\K$-vector space of dimension $n$, and let $F^\vee$ be its dual.
We assume $F$ is equipped with a Euclidean ($\K=\R$) or a Hermitian product ($\K =\C$).
\\
Let $L:F \to F$ be a normal operator which we assume to be diagonalizable.
Further assuming that $L$ is not a multiple of the identity map, 
let $E_1,\ldots,E_s$ be all the eigen-vector subspaces of $L$, with $s\geq 2$.
We consider the mapping 
$$
\begin{array}{rcl}
Q_L \, : \, F & \to & \wdg^2F \\
 \xi & \to & L\xi\wdg \xi
\end{array}
$$
Since $\wedge^2F$ embeds into $S^2F^\vee$ as the skew-symmetric forms, 
we can also consider that $Q_L$ takes values in $(S^2F^\vee)^\frac{n(n-1)}{2}$.

\smallskip
Let us define 
$$
EV_L:=\{\xi\in F\setminus 0: Q_L(\xi) = 0\} = (E_1)^\cdot \cup \ldots \cup (E_s)^\cdot.
$$ 
the union of the pointed eigen-vector subspaces of $L$, 
which we rename as the (\em{pointed}\em) \em eigen-bouquet of $L$. \em 
It is a smooth submanifold of the pointed vector space 
$F^\cdot$. Since it is invariant by the action of $\K^*$, it gives rise
to a projective submanifold of $\bP F$ with $s$ connected/irreducible components. 

\smallskip
Let $DQ_L: TF^\cdot =F^\cdot\times F \to T(\wdg^2 F)$ be the differential mapping 
of $Q_L$. Looking at $DQ_L$ as a mapping $F^\cdot \to Hom(F,\wdg^2 F)$
with linear coefficients, it is a regular section of some regular vector bundle over $F^\cdot$.

Let  $\cB = (\bb_1,\ldots,\bb_n)$ be an orthonormal/unitary basis of $F$, and 
let $M$ be the matrix of $L$ in this basis. Let $\bb_{(i,j)} := \bb_i\wdg\bb_j$.
We write 
$$
L\bv \wdg \bv = M\bv \wdg \bv = : \, \bQ(\bv) \, = \sum_{1\leq i<j\leq n}\, Q_{i,j}(\bv) \, \bb_{(i,j)}.
$$
with $Q_{i,j}(\bv):=
(M\bv)_i v_j - (M\bv)_j v_i$. We do see that 
$$
D_\bw \bQ 
=  \sum_{1\leq i<j\leq n} \omg_{(i,j)}(\bQ,\bw) \; \bb_{(i,j)}
$$
where $\omg_{(i,j)} (\bQ,\bw)$ is a $\K$-linear form over $T_\bw F$ with linear (in $\bw$) coefficients in $F$.

A non-zero vector $\bv$ lies in $EV_L$ if and only if $Q_{i,j}(\bv) = 0$ for all $i<j$. Thus we deduce
that 
$$
D_\bv \bQ \cdot \bv = 0
$$

Assume that the basis $\cB$ is such that the matrix $M$ is diagonal with diagonal coefficients $a_1,\ldots,a_n$. 
Thus there exist $i<j$ with $a_i \neq a_j$. 
We obtain
$$
\bQ (\bv) := M\bv \wdg \bv = \sum_{i<j} (a_i-a_j) v_iv_j \;\bb_{(i,j)}
$$
Let $\bw = (w_1,\ldots,w_n) \in EV_L$, so that $M\bw = a \bw$ for some diagonal coefficient $a$ of $M$.
Thus we deduce that 
$$
D_\bw \bQ = 2 \sum_{a_i=a,a_j\neq a \& i\neq j} (a-a_j) w_i\;\rd v_j \;\bb_{(i,j)}
$$
Let $e$ be the dimension of the eigen-space $E_a$ of $L$ containing $\bw$. 
We can assume that $a = a_1$ and $w_1\neq 0$ and $w_j = 0$ once $j\geq 2$, so that 
$$
D_\bw \bQ = 2 \sum_{j\geq e+1} (a_1-a_j) w_1\; \rd v_j \; \bb_{(1,j)}
$$
Form this presentation we deduce the obvious following 
\begin{lemma}\label{lem:wedge-eigenspace}
Let $\bw$ be an eigen-vector of $M$ whose eigen-space has dimension $e$. Thus $\wdg^{n-e+1} D_\bw Q_L $ is identically 
null while $\wdg^{n-e} D_\bw Q_L$ is not identically null. Thus there $i<j$ such that $\omg_{(i,j)}(\bQ,\bw)$ is not 
a null $1$-form over $\Rn$.
\end{lemma}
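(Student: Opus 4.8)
The plan is to read everything off the explicit formula for $D_\bw\bQ$ established immediately before the statement. After the normalization made there (an orthonormal basis diagonalizing $M$, with the relevant eigenvalue $a=a_1$, $\bw=(w_1,0,\ldots,0)$, $w_1\neq 0$, and $e=\dim E_{a_1}$), one has
$$
D_\bw\bQ = 2\sum_{j\geq e+1}(a_1-a_j)\,w_1\;\rd v_j\;\bb_{(1,j)},
$$
i.e. the $\K$-linear map $D_\bw Q_L:T_\bw F\cong F\to\wdg^2F$ sends a tangent vector $\bh=(h_1,\ldots,h_n)$ to $2w_1\sum_{j\geq e+1}(a_1-a_j)h_j\,\bb_{(1,j)}$. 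Hence its image is the span of the $n-e$ vectors $\{\bb_{(1,j)}:e+1\leq j\leq n\}$, which are distinct members of the standard basis $\{\bb_{(i,j)}\}_{i<j}$ of $\wdg^2F$ and so linearly independent. Therefore $D_\bw Q_L$ has rank exactly $n-e$; note $n-e\geq 1$ because $L$ is not a scalar operator ($s\geq 2$), so this index set is nonempty.

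Next I would invoke the elementary fact that for a $\K$-linear map $\phi:U\to W$ the exterior power $\wdg^k\phi:\wdg^kU\to\wdg^k W$ is nonzero if and only if ${\rm rank}\,\phi\geq k$: if ${\rm rank}\,\phi\geq k$, choosing $u_1,\ldots,u_k$ with $\phi(u_1),\ldots,\phi(u_k)$ independent gives $\wdg^k\phi(u_1\wdg\cdots\wdg u_k)=\phi(u_1)\wdg\cdots\wdg\phi(u_k)\neq 0$; conversely if ${\rm rank}\,\phi<k$ then $\phi(u_1),\ldots,\phi(u_k)$ are dependent for every $k$-tuple, so $\wdg^k\phi$ vanishes on all decomposable vectors and hence identically. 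Applying this with $\phi=D_\bw Q_L$ and ${\rm rank}\,\phi=n-e$ yields at once that $\wdg^{n-e}D_\bw Q_L$ is not the zero homomorphism while $\wdg^{n-e+1}D_\bw Q_L$ is.

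Finally, the last assertion is exactly the observation underlying the rank count: among the $1$-forms $\omg_{(i,j)}(\bQ,\bw)$, those with $i=1$ and $j\geq e+1$ are $2(a_1-a_j)w_1\,\rd v_j$, which is a nonzero $1$-form over $\Rn$ since $a_1\neq a_j$ (because $j>e=\dim E_{a_1}$) and $w_1\neq 0$; for instance $(i,j)=(1,e+1)$ works. Since all of this was carried out after a harmless orthonormal change of basis and a relabelling of coordinates, the conclusions are basis-independent. There is no genuine obstacle here — this is precisely the "obvious" consequence the authors flag; the only points that deserve a line of care are the rank-versus-exterior-power equivalence and the remark that the $\bb_{(1,j)}$ are genuinely independent, so that ${\rm rank}\,D_\bw Q_L$ is exactly $n-e$ and not smaller.
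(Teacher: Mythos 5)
Your proof is correct and follows exactly the route the paper intends: the paper's own proof is the one-line remark that ``computing with $\bQ$ provides the answer,'' referring to the explicit formula $D_\bw\bQ = 2\sum_{j\geq e+1}(a_1-a_j)w_1\,\rd v_j\,\bb_{(1,j)}$ derived just before the lemma, from which the rank count $n-e$ and the exterior-power statement follow as you describe. You have merely filled in the standard fact that $\wdg^k\phi\neq 0$ iff $\mathrm{rank}\,\phi\geq k$, which the paper leaves implicit.
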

\begin{proof}
It is indeed a property on $Q_L$, and computing with $\bQ$ provides the answer.
\end{proof}
To follow on Lemma \ref{lem:elementary-equal} and justify the material presented in Section \ref{section:SVEPA} 
we deduce
\begin{lemma}\label{lem:Qij-generate}
The coefficients $Q_{i,j}\in S^2F^\vee$ of the mapping $Q_L$ generates the ideal
of the polynomials over $F$ vanishing along $EV_L$.
\end{lemma}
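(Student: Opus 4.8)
The plan is to obtain this from the elementary polynomial algebra of Section \ref{section:SVEPA}, once the single geometric fact about $Q_L$ has been recorded. First I would observe that the ideal generated by the $Q_{i,j}$ is intrinsic: it is the ideal of $SF^\vee$ generated by $\ell\circ Q_L$ as $\ell$ runs over $(\wdg^2 F)^\vee$, and hence does not depend on the orthonormal/unitary basis used to write down the $Q_{i,j}$. So one may keep the basis $\cB$ of the preceding discussion, in which the matrix $M$ of $L$ is diagonal. Since $L$ is diagonalizable and not a multiple of the identity, $F = E_1\oplus\ldots\oplus E_s$ with $s\geq 2$ is the orthogonal decomposition into eigen-spaces, and, $\cB$ being adapted to it, the $E_i$ are coordinate subspaces of $F$ in direct sum; therefore the ideal $J$ of $SF^\vee$ of the polynomials vanishing along $EV_L = (E_1)^\cdot\cup\ldots\cup(E_s)^\cdot$, equivalently along $E_1\cup\ldots\cup E_s$, is exactly of the type treated in Lemma \ref{lem:elementary}.

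Next, let $K$ be the ideal of $SF^\vee$ generated by the $Q_{i,j}$. As each $Q_{i,j}$ lies in $S^2F^\vee$, the ideal $K$ is generated by quadratic polynomials, and since the $Q_{i,j}$ vanish along $EV_L$ we have $K\subset J$ for free ($J$ being reduced). The point I would establish is that the two vanishing loci coincide: a vector $\bv$ lies in $V(K)$ if and only if $Q_{i,j}(\bv)=0$ for all $i<j$, that is if and only if $L\bv\wdg\bv = 0$, and for $\bv\neq 0$ this means precisely that $L\bv$ is collinear to $\bv$, i.e. that $\bv$ is an eigen-vector of $L$. Hence $V(K) = E_1\cup\ldots\cup E_s = V(J)$.

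Finally I would invoke Lemma \ref{lem:elementary-coincides}: $K$ is generated by quadratic polynomials, $J$ is as in Lemma \ref{lem:elementary}, and $V(K) = V(J)$, so $K = J$, which is the claimed statement. (Alternatively, knowing $K\subset J$ and $K$ generated by quadratics, one could compute $\dim K\cap S^2F^\vee$ directly from the diagonal presentation of $\bQ$ preceding Lemma \ref{lem:wedge-eigenspace} and conclude via Lemma \ref{lem:elementary-equal}.) I do not expect a genuine obstacle here; the only points requiring care are the basis-independence of $K$, so that the statement is well posed whichever orthonormal/unitary frame one writes $Q_L$ in, and checking that the hypotheses of Lemma \ref{lem:elementary-coincides} really hold — namely the characterization $L\bv\wdg\bv = 0 \iff \bv$ is an eigen-vector (or $\bv = 0$), and the fact that an adapted orthonormal basis turns the eigen-spaces into coordinate subspaces in direct sum, so that $J$ indeed falls under Lemma \ref{lem:elementary}.
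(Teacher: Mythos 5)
Your argument is correct and follows exactly the route the paper intends: the paper states the lemma without an explicit proof, as a deduction from the diagonal-basis computation $\bQ(\bv)=\sum_{i<j}(a_i-a_j)v_iv_j\,\bb_{(i,j)}$ together with the elementary lemmas of Section \ref{section:SVEPA}, and your reduction to that basis (after the worthwhile observation that the ideal generated by the $Q_{i,j}$ is basis-independent) plus the appeal to Lemma \ref{lem:elementary-coincides} (or the dimension count via Lemma \ref{lem:elementary-equal}) supplies precisely that deduction. One could even shortcut the vanishing-locus comparison by noting that in the diagonal basis the nonzero $Q_{i,j}$ are nonzero scalar multiples of exactly the minimal monomial generators of $J$ listed in Lemma \ref{lem:elementary}(ii), but your version is equally valid.
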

%
%
%
%
%
%
%
%
%
%
%
%
%
%
%
%
%
%
%
%
%
%
%
%
%
\section{The Essential Lemma}\label{section:TKL}
We present here a result of technical nature which is the substance of the structure of real analytic 
families of normal operators we wish to exhibit. Having in mind the material presented in Section 
\ref{section:SVEPA} and Section \ref{section:SLAMA}, will provide hints about how to use Lemma 
\ref{lem:essential} and Corollary \ref{cor:essential} below.

\bigskip
Let $\cU$ be a connected open subset of $\R^u$ and let $J:\cU \to H_2[X]^d$ be a real analytic
mapping with values in the quadratic polynomials over $\K^n$, where $d$ is some positive integer number. 
Let write $J = (J_1,\ldots,J_n)$.

Let $J_\bx$ be the ideal of $\K[X]$ generated by the quadratic polynomials
$(J_i(\bx))_{i=1,\ldots,d}$. Let $J_{\bx,2} := J_\bx\cap H_2[X]$ the $\K$-vector subspace of
$H_2[X]$ generated by $J_\bx$.

We further assume that the mapping $J$ satisfies the following hypotheses:

\smallskip
For each $\bx\in\cU$, the ideal $J_\bx$ is reduced and its vanishing locus is a bouquet of vector subspaces of $\K^n$,
$$
E_\bx := V(J_\bx) = \cup_{i=1}^{s_\bx} E_{i,\bx}
$$ 
such that 

(i) there exists $s\geq 1$ such that $s = s_\bx$ for all $\bx\in \cU$.

(ii) For each $\bx\in\cU$, the subspaces $(E_{i,\bx})_{i=1}^s$ are in direct sum.

(iii) For each $\bx\in \cU$, let $e_{i,\bx} := \dim E_{i,\bx} \geq 1$. 
Let $\be_\bx := (e_{1,\bx},\ldots,e_{s,\bx})$ be the $s$-tuple of the ordered dimensions of the sub-spaces $E_{i,\bx}$
(after a possible permutation on the indices, which \`a-priori depends on $\bx$).
The mapping $\bx \to \be_\bx$ is constant equal to $\be = (e_1,\ldots,e_s)$ with $e_i \geq 1$ and $e_1+\ldots+e_s = n$.

\medskip
The subset of $F :=\cU\times\K^n$ defined as 
$$
E := \cup_{\bx\in \cU}\,\{\bx\}\times V(J_\bx) = \cup_{\bx\in \cU} \,\{\bx\}\times E_\bx
$$
is a real analytic subset of $F$ and is called a \em bundle of bouquets of type $\be$.\em 

Let $(E_\bx)^\cdot$ be the pointed bouquet 
$E_\bx\setminus 0$. The subset $E^\cdot := \cup_{\bx\in\cU} \; \{\bx\}\times (E_\bx)^\cdot$ of $F^\cdot$ is 
the pointed cone bundle over $Z$ and is a real analytic subset of $F^\cdot$.

\medskip
The next result will be referred later as the \em Essential Lemma, \em
since everything we need to do is to produce a situation where it can be used.
\begin{lemma}\label{lem:essential}
The subset $E^\cdot$ is a real analytic submanifold of $F^\cdot$.
\end{lemma}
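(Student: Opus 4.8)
The goal is to show that the "total space of pointed bouquets" $E^\cdot \subset F^\cdot = \cU \times (\K^n)^\cdot$ is a real analytic submanifold. The natural strategy is to produce, near an arbitrary point $(\bx_0, \bw_0) \in E^\cdot$, a real analytic map whose zero set is exactly $E^\cdot$ and whose differential has locally constant rank equal to the expected codimension; then the (analytic) constant-rank theorem finishes it. The defining equations are already at hand: the coefficients $J_1(\bx), \ldots, J_d(\bx)$ are quadratic in the fiber variable $X \in \K^n$ and real analytic in $\bx$, and by the hypotheses of the section $E_\bx = V(J_\bx)$, so $E^\cdot = \{(\bx, \bw) : \bw \neq 0,\ J_i(\bx)(\bw) = 0,\ i = 1,\ldots,d\}$. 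By Lemma~\ref{lem:elementary}(iii) the codimension of each fiber bouquet $E_\bx$ inside $\K^n$ is $c := \sum_{1\le i<j\le s} e_i e_j$, a number independent of $\bx$ since the type $\be$ is constant; so the expected codimension of $E^\cdot$ in $F^\cdot$ is exactly $c$, and what must be shown is that the Jacobian of $(\bx,\bw) \mapsto (J_i(\bx)(\bw))_i$ with respect to $(\bx,\bw)$ has rank exactly $c$ at every point of $E^\cdot$.

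First I would reduce to the fiber direction alone: since $\bw_0 \in E_{\bx_0}$ which, by hypothesis (ii), is one summand of a direct sum decomposition of $\K^n$ whose type is the constant $\be$, the situation near $(\bx_0,\bw_0)$ is, after choosing a linear coordinate system adapted at $\bx_0$, exactly the "standardized" configuration of Section~\ref{section:SVEPA}; and by Lemma~\ref{lem:Qij-generate} together with Proposition~\ref{prop:diagonal-equivalent-conditions} the ideal $J_{\bx_0}$ is the ideal $J$ of that section, minimally generated by the $c$ monomials $x_i x_j$, $(i,j) \in \bigcup_{l<m} n_\be^l \times n_\be^m$. The point is that the partial Jacobian $\partial_\bw (J_i(\bx_0))(\bw_0)$ — a collection of linear forms in the fiber direction — already has rank $c$ at $\bw_0$: this is the content of the eigenspace computation of Lemma~\ref{lem:wedge-eigenspace}, which says that at a nonzero vector $\bw_0$ of the summand $E_{1,\bx_0}$ (say of dimension $e_1$), the differential $D_{\bw_0}Q_L$ has rank $n - e_1$, and more generally summing over the $s$ summands one recovers rank $\sum_{i} (n - e_i) = \ldots$; I would instead argue directly with the monomial generators: the differentials of the $x_i x_j$ at $\bw_0 = (w_1,0,\ldots,0)$ with $w_1 \neq 0$ are $w_1\, dx_j$ for $(1,j)$ with $j \notin n_\be^1$, and $0$ otherwise, giving exactly $e_2 + \ldots + e_s = n - e_1$ linearly independent forms; running this over every summand and checking the forms produced from different summands remain independent (they involve disjoint coordinate blocks) yields rank exactly $c$. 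Since adding the $\partial_\bx$-columns can only raise the rank and $c$ is the codimension, the rank is constant equal to $c$ on a neighbourhood of $(\bx_0,\bw_0)$ in $E^\cdot$ (it is $\ge c$ by the fiber computation and $\le c$ because $E^\cdot$ cannot have codimension larger than each fiber's).

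Finally I would invoke the real analytic implicit/constant-rank theorem: on a neighbourhood $\cW$ of $(\bx_0,\bw_0)$ in $F^\cdot$, the map $\Phi = (J_i(\bx)(\bw))_{i=1}^d : \cW \to \K^{d}$ has differential of rank $c$ at $(\bx_0,\bw_0)$; choosing $c$ coordinates of $\Phi$ whose differentials are independent there, their common zero set is a real analytic submanifold of codimension $c$ containing $E^\cdot \cap \cW$, and — because each fiber slice of that zero set is contained in the corresponding $V(J_\bx)$ which also has codimension $c$ — the two coincide on a possibly smaller neighbourhood. Hence $E^\cdot$ is locally a real analytic submanifold of $F^\cdot$ of codimension $c$, which is the assertion. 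The main obstacle, and the step deserving genuine care rather than a one-line dismissal, is the rank computation: one must be sure that the partial derivatives in the fiber direction alone already achieve the full codimension $c$ at \emph{every} point of $E^\cdot$ (not just at generic points of each summand), and this is precisely why the hypotheses that the type $\be$ be constant and that the fibers be honest direct sums are indispensable — they are what force $J_\bx$ to be, in suitable coordinates, the fixed monomial ideal whose Jacobian we can compute by hand via Lemma~\ref{lem:wedge-eigenspace}.
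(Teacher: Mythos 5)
Your overall template --- put the summands in coordinate position at the base point $(\bx_0,\bw_0)$, compute the fiber-direction differentials of the quadratic generators, and invoke the analytic implicit function theorem --- is the same as the paper's. But the quantitative claim on which you hang the argument is false. The number $c=\sum_{i<j}e_ie_j$ of Lemma \ref{lem:elementary}(iii) is the dimension of $J_\bx\cap S^2F^\vee$, i.e.\ the number of minimal quadratic generators of $J_\bx$; it is \emph{not} the codimension of the bouquet $V(J_\bx)$, which is not even equidimensional: at a nonzero point of the summand $E_{i,\bx}$ its local codimension is $n-e_i$. Correspondingly the fiber Jacobian of the full system at $\bw_0=(w_1,0,\ldots,0)\in E_{1,\bx_0}$ has rank exactly $n-e_1$, not $c$ (this is precisely what Lemma \ref{lem:wedge-eigenspace} records): as your own computation shows, every generator other than the $u_ku_l$ with $k\in n_\be^1$, $l\notin n_\be^1$ has vanishing differential there, and among those only $k$ with $w_k\neq 0$ contribute. ``Running this over every summand'' adds up ranks computed at \emph{different} points of $E^\cdot$; it does not raise the rank at any single point. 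Concretely, for $n=3$, $\be=(1,1,1)$, $J_\bx=(u_1u_2,u_1u_3,u_2u_3)$, one has $c=3$ while the three coordinate axes have codimension $2$ and the Jacobian at $(1,0,0)$ has rank $2$. Since $E^\cdot$ has neither a single codimension nor a single Jacobian rank (unless all $e_i$ coincide), the constant-rank theorem cannot be applied to the whole of $E^\cdot$ as you propose. Your closing step is also reversed: the common zero set $Z$ of a subfamily of the generators \emph{contains} each $V(J_\bx)$ rather than being contained in it, so the identification of $E^\cdot$ with $Z$ near $(\bx_0,\bw_0)$ still requires an argument.

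The repair is the paper's stratification. Accept that the rank is only $n-r$ along the part of $E^\cdot$ lying in $r$-dimensional summands, and separate these parts using the ideals $M_r$ of $(n-r)\times(n-r)$ minors of the fiber Jacobian: the identity $\cup_{k\leq e}(E_\bx^{(k)})^\cdot=E^\cdot\setminus V(M_e)^\cdot$ makes each pure stratum $(E^{(r)})^\cdot$ open and closed in $E^\cdot$, so it suffices to treat each separately. On $(E^{(r)})^\cdot$ the $n-r$ differentials $D_\bu c_{k,l}(\by)$, $l\notin n_\be^i$, for a single index $k$ with $u_k\neq 0$ are independent, and $(E^{(r)})^\cdot$ is a closed analytic subset of full local dimension $u+r$ of the $(u+r)$-dimensional manifold these equations cut out, hence coincides with it locally. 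If you replace your single rank $c$ by the locally constant rank $n-e_i$ and add this separation-of-strata step, your argument becomes the paper's proof.
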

\begin{proof}
By the results of Section \ref{section:SVEPA}, let $d_J$ be the dimension of any of the vector-subspaces $J_{\bx,2}
\subset H_2[X]$, for all $\bx\in\cU$. We will re-use some of the notations of Section \ref{section:SVEPA} below.

Let $\by$ be a given point in $\cU$.
The ideal $J_\by$ is generated by exactly $d_J$ quadratic polynomials, which are $\K$-linear combinations
of $J_1(\by),\ldots,J_d(\by)$. Up to a linear change in $\K^n$, we can assume that for each $i=1,\ldots,s$
we have
$$
E_{i,\by} = \cap_{j\notin n_\be^i} \{\bu = (u_1,\ldots,u_n) \, : \, u_j = 0\}.
$$
The generators of $J_\by$ can be chosen to be the polynomials 
$c_{k,l} (\by)(X) = X_lX_l$ for $k,l$ as in  \ref{lem:elementary}, in such a way that 
each mapping $\bx \to c_{k,l}(\bx)(X) \in H_2[X]$ is real analytic over $\cU$ and for each $\bx$
gives a quadratic polynomial of $J_\bx$. Since these polynomials generate $J_\by$,
the family $(c_{k,l}(\bx))_{k,l}$ generates $J_\bx$ for $\bx$ nearby $\by$. Since the result we are looking for
is local at $\by$ we can assume that these polynomials generates $J_\bx$ for all $\bx \in \cU$. Thus, for $\bu\in \K^n$,
we can write 
$$
c_{k,l}(\bx)(\bu) = u_ku_l + \sum_{1\leq i<j\leq n} c_{k,l}^{i,j}(\bx) u_iu_j 
$$ 
where each function $\bx \to  c_{k,l}^{i,j}(\bx)$ is real analytic and vanishes at $\by$.
For any $\bu\in E_\by$, we have $c_{k,l}(\by)(\bu) = 0$. 
Let $1\leq i \leq s$ be given. For $\bu$ in $E_{i,\by}$, we do find  
\begin{center}
\begin{tabular}{rcl}
if $k,l \notin n_\be^i$ and $1\leq m \leq n$ & then & $\dd_{u_m} c_{k,l}(\by)(\bu) = 0$ \\
if $k,m\in n_\be^i$ and $l\notin n_\be^i$ & then & $\dd_{u_m}  c_{k,l}(\by)(\bu) = 0$ \\
if $k\in n_\be^i$ and $l,m\notin n_\be^i$ & then & $\dd_{u_m} c_{k,l} (\by)(\bu) = \delta_{l,m}u_k$
\end{tabular}
\end{center}
In particular we see that for each $k\in n_\be^i$
$$
\omg_k(\bu;\by) := \wdg_{l=e_i+1}^n D_\bu c_{k,l}(\by) = u_k^{n-e_i} \wdg_{l=e_i+1}^n \rd u_l.
$$
Taking $\bu \in E_\by \setminus 0$, there is $1\leq i \leq s$ such that $\bu\in E_{i,\by}$ and
there exists $k \in n_\be^i$ such that $\omg_k(\bu;\by) \neq 0$.

\smallskip
Let us consider the real analytic mapping 
$$
\begin{array}{rccl}
C^e \;\; : & F & \to &\K^{d_J} \\
 &(\bx;\bu) & \to & (c_{k,l}(\bx)(\bu))_{k,l}
\end{array}
$$
so that $(C^e)^{-1}(\{0\}^{d_J}) = E$.
Given $1 \leq r \leq n$, let $M_r$ be the ideal of $\cO_\cU[X]$ generated by the $(n-r)\times (n-r)$ minors of 
the $n\times d_J$ matrix $[\dd_{u_m}C^e]=[\dd_{u_m}c_{k,l}]_{m,(k,l)}$. 
Let $V(M_r) \subset F$ be the vanishing locus of this ideal and let $V(M_r)^\cdot$ be $V(M_r)\cap F^\cdot$.

\smallskip
Let $\be =(e_1,\ldots,e_s)$ be the $s$-tuple of the ordered dimensions of the bouquet-bundle $E$.
Let $e\in \{e_1,\ldots,e_s\}$. Let us consider the following subsets 
$$
E_\bx^{(e)} := \cup_{j\,:\,e_j = e} E_{\bx,j}\;\; {\rm and}\;\;
E^{(e)} := \cup_{\bx\in\cU}\; \{\bx\} \times E_\bx^{(e)}.
$$
In particular we see that 
$$
\cup_{k\leq e} (E_\bx^{(k)})^\cdot = E^\cdot \setminus V(M_e)^\cdot
$$
We deduce that each $E^{(r)}$ is a closed semi-analytic set of $F$
and is, if not empty, also a bouquet bundle of some type $(r,\ldots,r)$
where there are as many $r$ as the number of subspace in a(ny) fiber
$E_\bx^{(r)}$. Thus it is of dimension $u+r$ and of local dimension $u+r$ at each 
of its point. The local computation above producing the $(n-r)$-forms
$\omg_k$ shows that the projection $(E^{(r)})^\cdot$ onto $\cU$ is a submersion,
so that $(E^{(r)})^\cdot$ is a real analytic submanifold of dimension $u+r$ of $F^\cdot$.
\end{proof}
Since $E^\cdot$ is a real analytic submanifold of $F^\cdot$, we deduce that $Z := \bP E$
is closed real analytic submanifold of of $\bP F$.
Let $e \in \{e_1,\ldots,e_s\}$, and let $Z^{(e)} := \bP E^{(e)}$, which is a real analytic 
submanifold of $\bP F$ whose fibers consists of a (disjoint) union of $d_e := \#\{1\leq j \leq s\, : \, e_j = e\}$
linearly embedded projective spaces $\bP(\K^e)$. 
We deduce that, $Z$ has exactly $s$ connected components, namely 
$$
Z := \cP_1\sqcup\ldots\sqcup\cP_s
$$
such that each $\cP_i$ has dimension $u+e_i-1$. Each $\cP_i$ is a real analytic submanifold of $\bP F$ such that for 
each $\bx \in \cU$, the fiber $\cP_{i,\bx}$ is the projective space of 
one of the subspaces of the bouquet of $V(J_\bx) \subset F_\bx$ of dimension $e_i$. 
Thus we obtain the following
\begin{corollary}\label{cor:essential}
For each $i$, the projection $\bP F\to\cU$ induces a submersion $\cP_i \to \cU$.
More precisely each $\cP_i$ is a connected real analytic sub-bundle of $\bP F$, 
whose fiber is $\bP(\K^{e_i})\subset \bP(\K^n)$.
In other words $Cone(\cP_i)\subset F$, the cone-bundle over $\cP_i$, is a real analytic vector sub-bundle 
of $F$ over $\cU$.
\end{corollary}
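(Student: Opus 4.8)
The plan is to deduce Corollary \ref{cor:essential} directly from the Essential Lemma \ref{lem:essential}, whose proof has in fact already done most of the work: the key observation is that the local computation exhibiting the non-vanishing $(n-e_i)$-forms $\omg_k(\bu;\by)$ shows that the restriction of the projection $\pi: F^\cdot \to \cU$ to $E^\cdot$ (and to each stratum $(E^{(e)})^\cdot$) has everywhere surjective differential. So first I would record that $E^\cdot$, being a real analytic submanifold of $F^\cdot$ of dimension $u + \dim E_{\bx}^{(\cdot)}$ with $\pi|_{E^\cdot}$ a submersion, has each connected component $\cP_i$ (after passing to $\bP E$, which is a closed real analytic submanifold of $\bP F$ by the $\K^*$-invariance of $E^\cdot$) mapping submersively and surjectively onto the connected base $\cU$.

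Next I would argue that each $\cP_i$ is actually a \emph{locally trivial} bundle over $\cU$ with fiber $\bP(\K^{e_i})$, not merely a submersion. This follows because $\cP_i$ is a closed real analytic submanifold of $\bP F$ whose fiber over every $\bx$ is, by hypothesis (iii) of the Essential Lemma and the constancy of the dimension type $\be$, a linearly embedded projective space $\bP(E_{i,\bx}) \cong \bP(\K^{e_i})$ inside $\bP(F_\bx) = \bP(\K^n)$; the submersion $\cP_i \to \cU$ is proper onto $\cU$ since each fiber is compact, so by Ehresmann's fibration theorem (in the real analytic category, over a sufficiently small neighbourhood of any point of $\cU$, one uses a local real analytic trivialisation of $F$ and the fact that $\cP_{i,\bx}$ is a real analytic family of $e_i$-planes) it is a locally trivial fibre bundle with fibre $\bP(\K^{e_i})$. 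Equivalently, the assignment $\bx \mapsto E_{i,\bx} \in \bG(e_i, F)$ is a real analytic section of the Grassmann bundle, which is precisely what it means for $Cone(\cP_i) = \cup_{\bx}\{\bx\}\times E_{i,\bx}$ to be a real analytic vector sub-bundle of $F$ of rank $e_i$.

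The remaining point to nail down is that the fibre-wise projective subspace $\cP_{i,\bx}$ really does depend real analytically on $\bx$, i.e. that it defines a real analytic section of $\bG(e_i,F)$. For this I would use the forms from the proof of \ref{lem:essential}: near a point $\by$, for each $k \in n_\be^i$ the $(n-e_i)$-form $\omg_k(\bu;\by)$ built from the differentials $D_\bu c_{k,l}$ is non-zero at points of $(E_{i,\by})^\cdot$, which means the $(n-e_i)\times(n-e_i)$ minors of the Jacobian matrix $[\dd_{u_m}c_{k,l}(\bx)(\bu)]$ cut out $(E^{(e_i)})^\cdot$ cleanly and that the implicit function theorem applies with real analytic dependence on $\bx$; hence a local real analytic frame of $E_{i,\bx}$ exists. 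Passing from such local frames to the global statement is immediate since being a sub-bundle is a local condition and $\cP_i$ is already known to be connected.

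The main obstacle I anticipate is purely one of bookkeeping rather than of substance: one must keep track of the (a priori $\bx$-dependent) permutation of indices that orders the dimensions $e_{i,\bx}$, and verify that, because the ordered tuple $\be$ is globally constant and the components $E_{i,\bx}$ vary continuously (hence analytically) in $\bx$, the labelling of the $s$ components $\cP_1,\ldots,\cP_s$ can be made coherent on each connected piece — which is automatic once we know $Z = \cP_1 \sqcup \cdots \sqcup \cP_s$ has exactly $s$ connected components each submersing onto the connected $\cU$. Thus there is essentially nothing new to prove beyond unwinding the Essential Lemma; the corollary is a clean restatement in bundle-theoretic language, and the only care needed is to invoke properness/Ehresmann (or equivalently the local real analytic frames coming from the non-vanishing $\omg_k$) to upgrade ``submersion with projective-space fibres'' to ``locally trivial projective bundle'', equivalently ``$Cone(\cP_i)$ is a real analytic vector sub-bundle of $F$''.
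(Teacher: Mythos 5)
Your proposal is correct and follows essentially the same route as the paper, whose proof of Corollary \ref{cor:essential} consists of a single sentence citing the local computations in the proof of Lemma \ref{lem:essential} (the non-vanishing forms $\omg_k$ giving the submersion, with $E$ the $\K$-cone bundle over $Z$). Your extra detail --- the implicit-function-theorem argument producing local real analytic frames of $E_{i,\bx}$, which is what actually upgrades the submersion to a real analytic sub-bundle (the appeal to Ehresmann alone would only give smooth local triviality) --- is exactly the content the paper leaves implicit.
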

\begin{proof}
The projection of $Z$ onto $\cU$ is a submersion 
as a result of the local computations done in the proof of Lemma \ref{lem:essential}, 
since $E$ is the $\K$-cone bundle over $Z$.
\end{proof}
%
%
%
%
%
%
%
%
%
%
%
%
%
%
%
%
%
%
%
%
%
%
%
%
%
\section{Reduced model}\label{section:RM}
Let $F$ be the trivial $\K$-vector bundle $\cU\times \K^n$ over a connected open subset $\cU$ of $\R^u$.
We assume that $F$ is equipped with the Euclidean/Hermitian structure in each of its fiber.
Let $Hom(F,F) = \cU\times Hom(\K^n,\K^n)$ be the $\K$-vector bundle over $\cU$ of $\K$-endomorphisms of $F$.

\medskip
The tangent bundle $T F$ decomposes as the direct sum 
$$
T^h F\oplus T^v F
$$ 
of $T^hF$, the \em horizontal vector sub-bundle, \em with $T^vF$, the \em vertical vector sub-bundle, \em both 
being real analytic $\K$-vector bundles over $F$, where, for $\bp =(\by,\bw)$, the respective fibers
are defined as $T_\bp^h F := T_\by \cU$, and $T_\bp^v F := T_\bw F_\by$.
Of course we also find that 
$$
T\dot F = T^h \dot F\oplus T^v \dot F.
$$
A \em normal operator over $F$ \em is a real analytic section of $L: \cU \to Hom(F,F)$ which 
is normal in every fiber of $F$. 
We will consider, without change of notation, the restriction of $L$ to the pointed vector 
bundle $\dot F$.

\smallskip
Given a basis of $\K^n$, let $\cI_L$ be the $\cO_\cU$-ideal of coefficients of the matrix $M$ of $L$ in the given basis. 
Since it is defined by global sections it is a coherent $\cO_\cU$-ideal.
The vanishing locus of $\cI_L$ is the \em zero locus $V_L$ of $L$, \em that is the set of points $\bx\in \cU$ 
where $L(\bx)$ is the null operator. For the time being we will work under the following

\medskip\noindent 
{\bf Hypothesis 0.} \em Assume that $L$ is normal if $\K=\C$ and symmetric if $\K = \R$. \em

\smallskip
In order to postpone the use of blowings-up we demand

\medskip\noindent 
{\bf Hypothesis I.} \em Assume that $L$ is not a multiple of the identity. \em

\smallskip
As previously, we define the real analytic mapping of 
$\K$-vector bundles over $\cU$
$$
\begin{array}{rccl}
Q_L^e \;\; : & F & \to & \wdg^2F \\
 &(\bx;\bv) & \to & (\bx;Q_L(\bx;\bv) := L(\bx)\bv\wdg \bv)
\end{array}
$$
We will still write $Q_L^e$ and $Q_L$ for their restriction to $\dot F$.

\medskip
Let  $EV_L$ be \em the pointed eigen-bouquet bundle \em associated with the normal operator $L$
defined as 
$$
EV_L:=\{(\bx;\bv)\in \dot F: \; Q_L(\bx;\bv) = 0 \}. 
$$
Each fiber $EV_{L,\by}$ over $\by\in\cU$ is the (pointed) eigen-bouquet of $L(\by)$. 
Let $s(\by)$ be the number of distinct eigen-spaces of $L(\by)$.
Let $s_L := \max_\cU s(\by) \leq n$. Hypothesis I implies that $s_L\geq 2$.
Let 
$$
D_L := \{\by\in\cU: \; s(\by) < s_L\}
$$
which is the discriminant locus of the reduced characteristic polynomial of $L$
(also called generalized discriminant),
thus a closed real analytic subset of $\cU$ of positive codimension if not empty.
The function $s(\by)$ is then Zariski-analytic semi-continuous, that is constant 
and equal to $s_L$ over $\cU\setminus D_L$.

\smallskip
Let $(\cU^i)_{i\in I_L}$ be the connected components of $\cU\setminus D_L$, which are locally 
finitely many. 
Let $i\in I_L$ be given. For any $\bx\in\cU^i$ the operator $L(\bx)$ has 
exactly $s_L$ eigen-spaces $E_{j,\bx}^i$ for $j=1,\ldots,s_L$. 

\smallskip
We recall the following well known fact:
\begin{proposition}\label{prop:vector-bundle}
For each $i\in I_L$ and each $j = 1,\ldots,s_L$ the subset $E_j^i := \cup_{\bx\in \cU^i} E_{j,\bx}^i$ of $F^i:=F|_{\cU^i}$
is a real analytic $\K$-vector bundle over $\cU^i$.
\end{proposition}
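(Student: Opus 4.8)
The statement is essentially a parametrized version of the spectral theorem combined with the fact that an eigenspace of multiplicity which is locally constant varies real-analytically. The plan is as follows. Fix $i\in I_L$ and $j\in\{1,\ldots,s_L\}$, and fix a base point $\by_0\in\cU^i$. On $\cU^i$ the number of distinct eigenvalues of $L(\bx)$ is constant equal to $s_L$, and one should first check that the multiplicity of the $j$-th eigenspace $E^i_{j,\bx}$ is also locally constant on $\cU^i$. Indeed, the distinct (complex) eigenvalues $\lbd_1(\bx),\ldots,\lbd_{s_L}(\bx)$ are the roots of the reduced characteristic polynomial, which on $\cU^i$ has constant degree $s_L$ and nonvanishing discriminant; hence its roots depend real-analytically on $\bx$ locally (one may branch them continuously, and by the implicit function theorem applied to a simple root of the reduced polynomial they are real-analytic; when $\K=\R$ a root may be genuinely complex, in which case it occurs with its conjugate and one works with the complexification $L_\C(\bx)$ throughout). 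The multiplicity $e_j$ of $\lbd_j(\bx)$ is then constant on a connected set since $L$ is diagonalizable (Hypothesis 0) so the algebraic multiplicity equals $\dim_\K\ker(L(\bx)-\lbd_j(\bx)\,\mathrm{Id})$, a semicontinuous integer-valued function whose sum over $j$ is the constant $n$; by a connectedness argument each summand is locally constant.

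Next I would exhibit $E^i_{j,\bx}$ locally as the image of a real-analytic projector. The spectral projector onto $E^i_{j,\bx}$ is given by the holomorphic (in $L_\C$) functional calculus
\begin{equation}\label{eq:proj}
P_j(\bx) \;=\; \frac{1}{2\pi\sqrt{-1}}\oint_{\Gm_j} \bigl(\zt\,\mathrm{Id} - L(\bx)\bigr)^{-1}\,\rd\zt,
\end{equation}
where $\Gm_j$ is a small circle in $\C$ enclosing $\lbd_j(\by_0)$ and no other eigenvalue; for $\bx$ in a neighbourhood $\cV$ of $\by_0$ the eigenvalues move continuously, so $\Gm_j$ still separates $\lbd_j(\bx)$ from the rest, and the integrand is real-analytic in $(\bx,\zt)$ on $\cV\times\Gm_j$. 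Hence $\bx\mapsto P_j(\bx)$ is a real-analytic family of endomorphisms of $\K^n$ (when $\K=\R$, because $L(\bx)$ is real and $\Gm_j$ can be taken symmetric under conjugation when $\lbd_j$ is real, or one pairs $\Gm_j$ with $\bar\Gm_j$ when $\lbd_j\notin\R$, so that $P_j(\bx)$ has real entries in the appropriate sense), and $P_j(\bx)$ is the orthogonal projector onto $E^i_{j,\bx}$ with constant rank $e_j$ on $\cV$.

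Finally I would conclude local triviality by a standard argument: since $P_j(\bx)$ has constant rank $e_j$ near $\by_0$, pick vectors $\bv_1,\ldots,\bv_{e_j}\in\K^n$ whose images $P_j(\by_0)\bv_1,\ldots,P_j(\by_0)\bv_{e_j}$ form a basis of $E^i_{j,\by_0}$; by continuity $P_j(\bx)\bv_1,\ldots,P_j(\bx)\bv_{e_j}$ remain linearly independent, hence a basis of $E^i_{j,\bx}$, for $\bx$ in a smaller neighbourhood, and they depend real-analytically on $\bx$. Applying Gram--Schmidt (which preserves real-analyticity since the relevant inner products never vanish) yields a real-analytic orthonormal frame of $E^i_j$ over that neighbourhood, i.e.\ a local real-analytic trivialization; the transition maps between two such frames are real-analytic as compositions of real-analytic maps. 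This shows $E^i_j=\cup_{\bx\in\cU^i}E^i_{j,\bx}$ is a real-analytic $\K$-vector sub-bundle of $F^i$ of rank $e_j$. The main obstacle is purely bookkeeping in the real case $\K=\R$: one must track how a complex-conjugate pair of eigenvalues of $L_\C(\bx)$ contributes a single real invariant subspace of even dimension, and verify that the contour-integral projector, suitably symmetrized, has real matrix entries; everything else is the classical analytic perturbation theory of Kato recorded in \cite{Kat}.
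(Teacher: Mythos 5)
Your proof is correct, but it is not the route the paper takes; in fact the paper's stated purpose is to avoid exactly the two ingredients you rely on. You first regularize the eigenvalues on $\cU^i$ (simple roots of the reduced characteristic polynomial, hence locally real analytic) and then realize $E^i_{j}$ as the image of the Kato--Riesz contour-integral projector $P_j(\bx)=\frac{1}{2\pi\sqrt{-1}}\oint_{\Gm_j}(\zt\,\mathrm{Id}-L(\bx))^{-1}\,\rd\zt$, concluding by a constant-rank-plus-Gram--Schmidt argument. This is the classical perturbation-theoretic proof (the paper itself introduces the Proposition as a ``well known fact''), and all your steps hold on $\cU^i$ since the eigenvalues stay separated there; your extra care about complex-conjugate pairs is actually unnecessary because Hypothesis 0 makes $L$ symmetric when $\K=\R$. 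The paper instead observes (Lemma \ref{lem:Qij-generate}) that the components $Q_{i,j}(\by)$ of $Q_L$ generate the reduced quadratic ideal $J_\by$ of the eigen-bouquet, and then invokes the Essential Lemma \ref{lem:essential} and Corollary \ref{cor:essential}, which show directly that the pointed eigen-bouquet bundle is a submanifold submersing onto the base and that each projective component is a sub-bundle. What the paper's approach buys is precisely what is emphasized in the remark following the Proposition: no regularity of eigenvalues is used, and the identical ideal-theoretic mechanism survives the passage through the discriminant after blowing up (Corollary \ref{cor:regular}), where your contour $\Gm_j$ can no longer separate colliding eigenvalues. What your approach buys is brevity and self-containedness given classical background, and it yields the local analyticity of the eigenvalues on $\cU^i$ as a by-product rather than as a corollary.
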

\begin{proof}
Assume that $i$ is fixed. Let $\by$ be a point of $\cU^i$.

\smallskip
By Lemma \ref{lem:Qij-generate}, the quadratic polynomials $Q_{i,j}(\by)$, components of the mapping $Q_L$,
generate the ideal $J_\by$ of the polynomials vanishing over $EV_{L,\by} \cup 0 \subset F_\by$.
Thus $J_\by$ is reduced and we conclude using the Essential Lemma \ref{lem:essential} and Corollary 
\ref{cor:essential}.
\end{proof}
\begin{remark}
The proof of Proposition \ref{prop:vector-bundle} did not use any regularity property of the eigen-values.
\end{remark}
%
Since $F$ is the trivial vector bundle $\cU\times\K^n$, 
we also find that 
\begin{equation}\label{eq:s2f-sf}
S^2F^\vee = \cU\times H_2[X] \;\; {\rm and } \;\; SF^\vee = \cU\times \K[X].
\end{equation}

The ordered $s_L$-tuple of dimensions of the eigen-spaces of $L$ is constant outside $D_L$.
Let us denote $\be_L \in S_{s_L}^n$ this ordered $s_L$-tuple of dimensions.

\medskip
For $\by\in\cU$, let 
$$
\Sgm_L^2:=\{(\bx,\bu,P)\in F\oplus S^2F^\vee \,:\, L(\bx)\bu \wdg\bu = 0 
\;\; {\rm and }\;\; P(\bx,\bu) = 0\}, 
$$
be the bundle of quadratic polynomials over $F$ vanishing along the (pointed) 
eigen-bouquet bundle $EV_L$.
Let $\pi_{S^2F^\vee}$ be the projection of $F\oplus S^2F^\vee$ over $S^2F^\vee$. Let
$$
J_{\by,2}:= \pi_{S^2F^\vee}((\Sgm_L^2)_\by)
$$
be the vector subspace of $S^2F^\vee_\by$ of the quadratic polynomials lying 
in $J_\by$, as was done in Section \ref{section:SVEPA}.
Let $d(\by) = d_2(J_{\by,2})$ be the dimension of $J_{\by,2}$ as in 
Section \ref{section:SVEPA}.
Following Lemma \ref{lem:elementary}, $d(\by)$ is the number of minimal generators of $J_\by$.

The vector subspace $J_{\by,2} \subset (S^2F^\vee)_\by = H_2[X] \simeq\K^{\frac{n(n+1)}{2}}$, 
for $\by$ outside of $D_L$, is of constant dimension 
$d_L \leq \frac{n(n-1)}{2}$ since its zero locus is the union of the eigen-spaces 
of $L(\by)$ and the ordered $s_L$-tuples
of the dimensions of these eigen-spaces is constant over $\cU\setminus D_L$ and equal to $\be_L$. 
At each point $\by$ of $D_L$ we find that $d(\by) < d_L$ by Corollary \ref{cor:elementary}.

Once an orthonormal/unitary basis of $\K^n$ is given, we obtain an explicit expression
$\bQ$ of the mapping $Q_L$. 
Its components, the mappings $\by \to Q_{i,j}(\by)$ are real analytic sections of $S^2F^\vee$, for any pair $1\leq i<j\leq n$.
Let $\cA_L$ be the coherent $\cO_\cU$-module of sections of the vector bundle (in the literal sense of Section 2) 
$\cup_{\by\in\cU} J_{\by,2} \subset S^2F^\vee$. It is generated by the $Q_{i,j}$ and is locally free of rank $d_L$. 
Let us also consider the following $\cO_\cU$-module 
$$
\cM_L := \wedge^{d_L} \cA_L.
$$ 
It is coherent and generated by 
$\wdg_{k=1}^{d_L} Q_{i_k,j_k}$ taken over all possible $d_L$-tuples of vectors $Q_{i,j}(\by)$. 
Its $\cO_\cU$-ideal of coefficients $\cF_L$ is the maximal Fitting ideal of $\cA_L$ with 
co-support exactly $D_L$, generated by the $d_L\times d_L$-minors of the 
$\frac{n(n-1)}{2}\times \frac{n(n+1)}{2}$-matrix formed by the sections
$Q_{i,j}$.  

Since $\wedge^2 F$ embeds canonically into $S^2F^\vee$ (as skew-symmetric forms),
the mapping 
$$
\wedge^{d_L} Q_L : \cU \to \wedge^{d_L}S^2F^\vee
$$
is a well defined real analytic section of $\wedge^{d_L}S^2F^\vee$, and its coefficients generate 
the module $\cM_L$. At each $\bx\notin D_L$ the (simple) $d_L$-vector $\wedge^{d_L} Q_L(\bx)$ represents the 
vector subspace $J_{\bx,2}$. This gives rise to the following real analytic section
$$
\bp_L :\cU\setminus D_L: \to \bP(\wedge^{d_L} S^2F^\vee)
$$ 
which factors through the Pl\"ucker embedding 
$$
\bG(d_L,S^2F^\vee_\by)\to \bP(\wedge^{d_L}S^2F^\vee).
$$
Along $D_L$ we get $(\wedge^{d_L}Q_L)|_{D_L} =0$. 

\medskip
Thus up to "principal-izing" the ideal $\cF_L$ we assume for the time being the following,

\medskip\noindent 
{\bf Hypothesis II.} \em The $\cO_\cU$-ideal $\cF_L$ is principal. \em

\smallskip
Let $\cM_L'$ be the $\cO_\cU$-module $\cF_L^{-1}\cdot\cM_L$.
It is a locally $\cO_\cU$-free module of rank $1$ with empty co-support. 
Equivalently the mapping $\bp_L$ extends analytically 
to the whole $\cU$ since we can divide by the local generator of $\cF_L$ at every point. 
Lifting the mapping $\bp_L$ to the (trivial) real analytic universal bundle of $\bG(d_L,S^2F^\vee)$,
provides a real analytic correspondence of points $\bx\in\cU$ to a $d_L$-dimensional subspace $J_{\bx,2}'$ of 
$S^2F^\vee_\bx = H_2[X]$ such that at any point $\by\notin D_L$ we know that 
$$
J_{\by,2} = J_{\by,2}'.
$$
In other words the module $\cM_L'$ is the "cone" over $\bp_L$.

\smallskip\noindent
Let $\bA_L$ be the (non-trivial) real analytic vector sub-bundle of $S^2F^\vee$ 
lifting $\bp_L$.
Let $\bx \to (A_\tau'(\bx))_{\tau=1,\ldots,d_L}$ be a local real analytic frame 
at $\by\in\cU$ of $\bA_L$. Suppose it is defined over a neighbourhood $\cV$ of $\by$.
Let $Q_\tau'(\bx;\bv)$ be the quadratic polynomial over $F_\bx$ defined by the vector $A_\tau'(\bx)$.
Thus 
$$
\cap_\tau \,\{(\bx;\bv)\in \dot{F}|_\cV \,:\; Q_\tau'(\bx;\bv) = 0 \} \subset EV_L \cap \pi^{-1}(\cV)
$$
and if $\cV\cap D_L$ is empty then
$$
\cap_\tau\,\{(\bx;\bv)\in \dot{F}|_\cV \,:\; Q_\tau'(\bx;\bv) = 0 \} = EV_L \cap \pi^{-1}(\cV)
$$
Let $J_\by'$ be the ideal of $SF^\vee_{\by} = \K[X]$ generated by the quadratic polynomials $Q_\tau'(\by)$, in 
other words generated by $J_{\by,2}'$.
Let $(\bx_k)_k$ be a sequence of $\cU\setminus D_L$ converging to $\by\in D_L$.
Let $E_{1,k},\ldots,E_{s_L,k}$ be the eigen-spaces of $L(\bx_k)$. Up 
to taking a sub-sequence, we can assume that each $(E_{i,k})_k$ converges (in the right Grassmannian) 
to $E_i \subset F_\by$. In particular the subspaces $E_1,\ldots,E_{S_L}$ are in orthogonal direct sum
and the restriction of $L(\by)$ to any $E_i$ is a multiple of the identity mapping of $E_i$.
By continuity along $\cV$, all the functions $Q_\tau'$ must vanish along the union  $E_\by :=\cup_{i=1}^{s_L} E_i$,
that is: the vanishing locus of $J_\by'$ contains $E_\by$.
The ideal $K_\by$ of quadratic polynomials vanishing along $E_\by$ generates (see Section \ref{section:SVEPA}) 
a vector subspace $K_{\by,2}$ of $S^2F^\vee_\by$ which following Lemma \ref{lem:elementary} is
of dimension $d_L$, since the $s_L$-tuple of the ordered dimensions of $E_1,\ldots,E_{s_L}$ is obviously $\be_L$. 
The reduced ideal $K_\by$ contains $J_\by'$ and by Lemma \ref{lem:elementary-equal} we deduce that $J_\by' = K_\by$.

\smallskip
The following result, although not necessary in order to have a complete proof, 
shows that at every point of the discriminant locus, in the current post-resolution of singularities 
setting we are working with, there exists a unique limit eigen-bouquet. 
\begin{lemma}\label{lem:limit-unique}
Let $\by$ be a point of $D_L$. 

\smallskip 
(i) The zero locus of $J_\by'$ consists 
of the union of pairwise orthogonal subspaces $G_1,\ldots,G_{s_L}$ which
are in direct sum. 

\smallskip
(ii) Let $\cU_i$ be a connected component of $\cU \setminus D_L$ whose closure contains $\by$.
Let $E_{j,\bx}^i$ be the eigen-space of $L(\bx)|\cU^i$.
Let $(\bx_k)_k$ be any sequence of points of $\cU_i$ converging to $\by$. 
Assume that each $E_{j,\bx_k}^i$ converges to $P_j$. There exists $\ve$ a permutation of 
$\{1,\ldots,s_L\}$, independent of the sequence $(\bx_k)_k$ taken, such that for 
the set of corresponding limit $(P_j)_j$ at $\by$ we have $P_j = G_{\ve(j)}$. 

\em In other words the \em real analytic vector bundles $E_j^i$, for $j=1,\ldots,s_L$ over $\cU_i$ extends as 
vector bundles of constant rank over $\clos(\cU_i)$. 
\end{lemma}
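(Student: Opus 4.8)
The plan is to use the extended module $\cM_L'$ (equivalently the analytic vector sub-bundle $\bA_L\subset S^2F^\vee$) to force uniqueness of the limit bouquet, exactly as in the paragraph preceding the statement. First I would establish (i). By the discussion just above the lemma, working in a neighbourhood $\cV$ of $\by$ with a local analytic frame $(A_\tau'(\bx))_{\tau=1}^{d_L}$ of $\bA_L$, the associated quadratic polynomials $Q_\tau'(\by)$ generate an ideal $J_\by'$ whose vanishing locus contains \emph{any} limit bouquet $E_\by=\cup_i E_i$ coming from a sequence in $\cU\setminus D_L$; moreover the limit subspaces $E_1,\dots,E_{s_L}$ are pairwise orthogonal, in direct sum, and have ordered dimension tuple $\be_L$ (orthogonality and the direct-sum property pass to limits in the Grassmannian, and the dimensions are locally constant on $\cU\setminus D_L$, hence the limit tuple is $\be_L$). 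The reduced ideal $K_\by$ of quadratics vanishing along such an $E_\by$ then has $\dim K_{\by,2}=d_L$ by Lemma \ref{lem:elementary}, contains $J_\by'$, and therefore equals $J_\by'$ by Lemma \ref{lem:elementary-equal}. This identifies $V(J_\by')$ with a single bouquet of pairwise orthogonal subspaces in direct sum; call its members $G_1,\dots,G_{s_L}$. Since $J_\by'$ depends only on $\by$ (not on the chosen sequence), the same bouquet $\{G_1,\dots,G_{s_L}\}$ is obtained from every admissible sequence, which is already most of (ii).

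For (ii) proper, fix a connected component $\cU_i$ of $\cU\setminus D_L$ with $\by\in\clos(\cU_i)$ and the analytic eigen-bundles $E_j^i$, $j=1,\dots,s_L$, over $\cU_i$ (Proposition \ref{prop:vector-bundle}). Given a sequence $\bx_k\to\by$ in $\cU_i$ along which each $E_{j,\bx_k}^i$ converges to some $P_j$, the previous paragraph shows $\{P_1,\dots,P_{s_L}\}=\{G_1,\dots,G_{s_L}\}$ as \emph{unordered} families; since the $G_l$ are pairwise distinct (they lie in a direct sum and are non-trivial), there is a well-defined permutation $\ve=\ve_{(\bx_k)}$ with $P_j=G_{\ve(j)}$. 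The remaining point is that $\ve$ does not depend on the chosen sequence. I would argue this by a connectedness/continuity argument on the index: the Grassmannian-valued maps $\bx\mapsto E_{j,\bx}^i$ are continuous on $\cU_i$, and for $\bx$ close to $\by$ the subspaces $E_{j,\bx}^i$ lie in small (disjoint) neighbourhoods of the distinct $G_l$'s in $\bG(e_j,F)$; the assignment "which $G_l$-neighbourhood contains $E_{j,\bx}^i$" is then locally constant in $\bx$ on a deleted neighbourhood of $\by$ intersected with $\cU_i$, and since $\cU_i$ is connected and $\by$ is a boundary point, a neighbourhood basis of $\by$ in $\cU_i$ can be taken connected, so this assignment is globally constant there; this constant is $\ve$. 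Having a sequence-independent $\ve$, relabelling the eigen-bundles so that $E_j^i\to G_j$, one sees that $\bx\mapsto E_{j,\bx}^i$ extends continuously — indeed, by the analyticity of the frame of $\bA_L$ on all of $\cV$ and the identification $V(J_\by')=\cup G_l$, analytically as a sub-bundle of constant rank $e_j$ — over $\clos(\cU_i)$ near $\by$, which is the final assertion.

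The main obstacle is the sequence-independence of $\ve$, i.e. ruling out that two different approach directions inside $\cU_i$ permute the limit subspaces differently. The content making this work is that $V(J_\by')$ is a \emph{single} bouquet with the correct dimension tuple (forced by Hypothesis II via $\cM_L'$, through Lemma \ref{lem:elementary-equal}) together with the fact that one may choose a connected neighbourhood basis of the boundary point $\by$ in $\cU_i$ (which holds because after the geometrically admissible blowings-up $\cU\setminus D_L$ has the local normal-crossing/manifold-with-corners structure at boundary points; more elementarily, shrinking $\cV$ one may assume $\cV\cap\cU_i$ is connected since $D_L$ has positive codimension and the components of $\cV\setminus D_L$ are finitely many and each locally connected at $\by$). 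Once connectedness of a local piece of $\cU_i$ at $\by$ is in hand, the locally-constant index argument closes the gap; everything else is the linear-algebra bookkeeping already packaged in Section \ref{section:SVEPA}.
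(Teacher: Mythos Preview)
Your argument for (i) is exactly the paper's: both simply invoke the identity $J_\by' = K_\by$ established in the paragraph immediately preceding the lemma. For (ii) your treatment is considerably more detailed than the paper's, whose entire proof reads: ``For any sequence like in (ii), the zero locus of $J_\by'$ must vanish along $\cup_{i=1}^{s_L} P_i$. The uniqueness of the permutation $\ve$ comes from the pairwise orthogonality of the vector sub-bundles $(E_j^i)_{j=1,\ldots,s_L}$.'' You correctly isolate the genuine issue --- sequence-independence of $\ve$ --- and supply the locally-constant/connectedness argument that the paper only gestures at with the phrase ``pairwise orthogonality''.

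One step deserves more care. Your claim that one may shrink $\cV$ so that $\cV\cap\cU_i$ is connected is not automatic under Hypothesis~II alone: a global connected component of the complement of a real analytic hypersurface can meet an arbitrarily small ball in several local pieces (think of the exterior region of a nodal curve at the node). Your first justification invokes the normal-crossing structure of Lemma~\ref{lem:GEM-1}, which is stronger than Hypothesis~II and only appears in the next section; your ``more elementary'' justification (finitely many components, each locally connected at $\by$) is not quite enough, since local connectedness of $\cU_i$ at a boundary point is precisely the point in question. That said, the paper's one-line proof does not address this either, and the author explicitly flags the lemma as ``not necessary in order to have a complete proof'': the actual extension over all of $\cU$ is obtained immediately afterward via Lemma~\ref{lem:regular} and Corollary~\ref{cor:regular}, which build the analytic sub-bundles $\cE_i$ globally from $\bA_L$ without ever needing this sequence argument.
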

\begin{proof}
Point (i) comes from the fact $J_\by' = K_\by$.

\smallskip\noindent
For any sequence like in (ii), the zero locus of $J_\by'$ 
must vanish along $\cup_{i=1}^{s_L} P_i$. The uniqueness of the permutation $\ve$ 
comes from the pairwise orthogonality of the vector sub-bundles $(E_j^i)_{j=1,\ldots,s_L}$.
\end{proof}

\medskip
So far we have produced the real analytic vector sub-bundle $\bA_L$ of $S^2F^\vee$ 
of rank $d_L$ lifting the mapping $\bp_L$. Thus we have produced a real analytic family 
of quadratic ideals $(J_\by')_{\by\in\cU}$ of $SF^\vee$.
For each $\by$, the zero locus $EV_{L,\by}'\cup \{0\} \subset \dot{F}_\by\cup \{0\}$ of $J_\by'$ in 
$F_\by = \K^n$ consists of a bouquet of vector subspaces of $F_\by$, 
in orthogonal direct sum contained in the eigen-bouquet $EV_{L,\by}$ of $L(\by)$. We define 
$$
EV_L' := \bigcup_{\by\in\cU} EV_{L,\by}' \subset EV_L \subset \dot{F}
$$
the "bundle-d" union taken over $\cU$ of the (pointed) vanishing loci of the ideals $J_\by'$.
We call $EV_L'$ the \em reduced \em(pointed)  \em eigen-bouquet bundle of 
$L$, \em
which is a closed real analytic subset of $EV_L$, thus of $\dot{F}$.
\begin{lemma}\label{lem:regular}
The subset $EV_L'$ is real analytic submanifold of $\dot F$.
\end{lemma}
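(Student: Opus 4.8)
The plan is to reduce Lemma~\ref{lem:regular} to the Essential Lemma~\ref{lem:essential}. Recall that in the current setting we have constructed the real analytic vector sub-bundle $\bA_L\subset S^2F^\vee$ of rank $d_L$, hence a real analytic family of quadratic ideals $(J_\by')_{\by\in\cU}$, each generated by the $d_L$-dimensional space $J_{\by,2}'$. To invoke Lemma~\ref{lem:essential} we must check that the mapping $\by\to (Q_1'(\by),\ldots,Q_{d_L}'(\by))$ (any local real analytic frame of $\bA_L$) satisfies hypotheses (i)--(iii) there: namely that each $J_\by'$ is reduced, that its vanishing locus $EV_{L,\by}'$ is a bouquet of subspaces in direct sum, and that the ordered tuple of dimensions is globally constant. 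Outside $D_L$ this is clear, since there $J_\by' = J_\by$ is the eigen-bouquet ideal of $L(\by)$, which by Proposition~\ref{prop:vector-bundle} (via Lemma~\ref{lem:Qij-generate}) is reduced with vanishing locus the eigen-bouquet, of constant type $\be_L$.

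Next I would handle the points of $D_L$. Fix $\by\in D_L$. The discussion preceding Lemma~\ref{lem:limit-unique} already establishes, using a sequence $(\bx_k)\to\by$ in $\cU\setminus D_L$ and passing to convergent subsequences of eigen-spaces $E_{i,k}\to E_i$, that the limit subspaces $E_1,\ldots,E_{s_L}$ are pairwise orthogonal and in direct sum, that $L(\by)$ restricts to a homothety on each $E_i$, and hence that the reduced ideal $K_\by$ of quadratic polynomials vanishing along $E_\by=\cup E_i$ has $K_{\by,2}$ of dimension $d_L$ with ordered dimension tuple $\be_L$. Since $J_\by'\subset K_\by$ and $\dim J_{\by,2}' = d_L = \dim K_{\by,2}$, Lemma~\ref{lem:elementary-equal} gives $J_\by' = K_\by$. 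In particular $J_\by'$ is reduced, its vanishing locus is the bouquet $E_\by$ of subspaces in orthogonal direct sum, and its type is exactly $\be_L$ — the same as at points outside $D_L$. Thus hypotheses (i)--(iii) of Lemma~\ref{lem:essential} hold on all of $\cU$, not just off $D_L$.

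With these hypotheses verified, the bundle of bouquets defined by $(J_\by')_\by$ is precisely a bundle of bouquets of type $\be_L$ in the sense of Section~\ref{section:TKL}, and its pointed cone bundle is $EV_L'$. The Essential Lemma~\ref{lem:essential} then immediately yields that $EV_L'$ is a real analytic submanifold of $\dot F$, which is the assertion. (One may equivalently phrase the conclusion through Corollary~\ref{cor:essential}: each connected component $\cP_i$ of $\bP EV_L'$ is a real analytic sub-bundle of $\bP F$ with fiber $\bP(\K^{e_i})$, so $EV_L' = \bigsqcup_i \bigl(Cone(\cP_i)\bigr)^\cdot$ is a finite disjoint union of real analytic submanifolds.)

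The main obstacle in this argument is not the application of the Essential Lemma itself, which is then formal, but the verification that the family $(J_\by')_\by$ still defines a \emph{reduced} ideal with a \emph{constant-type} bouquet as vanishing locus at points of $D_L$ — i.e. that principalizing $\cF_L$ really has "spread out" the eigen-bouquet into a limit bouquet of the generic type. That step rests crucially on the dimension count $\dim J_{\by,2}' = d_L$ combined with Lemma~\ref{lem:elementary-equal}, and on the orthogonality-and-direct-sum properties of limits of eigen-spaces of a normal operator; both ingredients are already in place in the excerpt, so the remaining work is bookkeeping.
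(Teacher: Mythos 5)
Your proposal is correct and follows essentially the same route as the paper: the paper's own proof of Lemma \ref{lem:regular} is a one-line appeal to the Essential Lemma \ref{lem:essential}, relying on the discussion preceding Lemma \ref{lem:limit-unique} (the identification $J_\by'=K_\by$ via Lemma \ref{lem:elementary-equal} and the orthogonal-direct-sum limits of eigen-spaces) to verify its hypotheses at points of $D_L$. You have simply made that verification explicit, which matches the intended argument.
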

\begin{proof}
Obtaining the ideals $(J_\bx')_{\bx\in\cU}$ was to be exactly in a situation to 
apply the Essential Lemma \ref{lem:essential}, which are in.
\end{proof}
Since $EV_L'$ is fiber-wise a (pointed) bouquet of 
subspaces of $F$ and a closed real analytic subset and a sub-manifold of $\dot{F}$, 
it defines the real analytic projective sub-bundle  
$$
\bP EV_L' \subset \bP F,
$$
consisting of $s_L$ connected components, namely 
$$
\cP_1\sqcup\ldots\sqcup\cP_{s_L}
$$
such that each $\cP_i$ has dimension $u+e_i-1$, where $(e_1,\ldots,e_{s_L}) = \be_L$. 
Each $\cP_i$ is a real analytic submanifold of $\bP F$ and a real analytic projective sub-bundle with fiber $\bP(\K^{e_i})$, 
and each $\cP_{i,\bx}$ is the projective space of one of the component of the bouquet of subspaces of 
$V(J_\bx) \subset F_\bx$ of dimension $e_i$, and is an eigen-space of $L(\bx)$. 
The proof of our main result under the conditions of this section ends as a consequence of 
Corollary \ref{cor:essential}:
\begin{corollary}\label{cor:regular}
(i) For each $i$, the projection $\bP F\to\cU$ induces a submersion $\cP_i \to \cU$.
More precisely, each $\cP_i$ is a connected real analytic sub-bundle of $\bP F$, 
whose fiber is $\bP(\K^{e_i})\subset \bP(\K^n)$.
In other words $\cE_i:=Cone(\cP_i)\subset F$, the cone-bundle over $\cP_i$, is a real analytic vector sub-bundle 
of $F$ over $\cU$.

\smallskip
(ii) Under the hypotheses 0 and I and II, any point $\by$ admits a neighbourhood 
$\cW$ over which exist $n$ real analytic sections of $F$ which form
an orthonormal/unitary frame of $F$ at every $\bx$ of $\cW$ consisting of eigen-vectors of $L$.

\smallskip
(iii) The eigen-values can be locally chosen real analytic.
\end{corollary}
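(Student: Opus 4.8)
The plan is to deduce Corollary \ref{cor:regular} directly from the structural results already assembled in this section, applying the Essential Lemma in the form of Corollary \ref{cor:essential} to the reduced family of ideals $(J_\bx')_{\bx\in\cU}$.

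First I would establish part (i). We have already observed that $EV_L'$ is, fiber-wise, a pointed bouquet of vector subspaces of $F_\bx$ which are in orthogonal direct sum, with constant ordered dimension $s_L$-tuple $\be_L$ (this is where Lemma \ref{lem:elementary-equal}, giving $J_\by' = K_\by$ at every $\by\in D_L$, is used: it forces the limit bouquet at a discriminant point to have the correct type, so the type does not drop). By Lemma \ref{lem:regular}, $EV_L'$ is a real analytic submanifold of $\dot F$ satisfying exactly the hypotheses under which the Essential Lemma was proved, so Corollary \ref{cor:essential} applies verbatim: the projectivization $\bP EV_L'$ decomposes into $s_L$ connected real analytic sub-bundles $\cP_i$ of $\bP F$ with fiber $\bP(\K^{e_i})$, and each $\cE_i := Cone(\cP_i)$ is a real analytic vector sub-bundle of $F$ over $\cU$ of rank $e_i$. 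Outside $D_L$ the fiber $\cE_{i,\bx}$ is an eigen-space of $L(\bx)$; along $D_L$ it is a limit of such, hence still an eigen-space of $L(\bx)$ by continuity of $L$ and closedness of the set of eigenvectors of $L(\bx)$ with a fixed eigenvalue.

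For part (ii), fix $\by\in\cU$. Each $\cE_i$ is a real analytic vector sub-bundle, so on a small enough neighbourhood $\cW$ of $\by$ it admits a real analytic frame, which we may orthonormalize by the (real analytic) Gram--Schmidt process using the fibered Euclidean/Hermitian structure on $F$; since the $\cE_i$ are pairwise orthogonal and $\cE_1\oplus\cdots\oplus\cE_{s_L} = F$ fiber-wise, concatenating these $s_L$ frames yields $n$ real analytic sections of $F$ forming an orthonormal/unitary frame at every point of $\cW$. Each frame vector lies in some $\cE_i$, hence is an eigen-vector of $L(\bx)$ at every $\bx\in\cW$ by the previous paragraph; thus $L$ is reduced (here: diagonal, since under Hypothesis 0 we are in the symmetric or complex normal diagonalizable case) in this frame. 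For part (iii), evaluate $L$ on the $j$-th frame section $\bb_j(\bx)$ of $\cE_i$: since $L(\bx)\bb_j(\bx)$ is a real analytic section of $\cE_i$ proportional to $\bb_j(\bx)$, the eigen-value is $\lambda_i(\bx) = \langle L(\bx)\bb_j(\bx),\bb_j(\bx)\rangle$, a real analytic function of $\bx$ on $\cW$.

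The only point requiring care — and the one I would expect to be the main obstacle — is the identification, along $D_L$, of the fibers of $EV_L'$ (equivalently of the $\cE_i$) with genuine eigen-spaces of $L$, and the fact that their dimensions do not collapse there. This is precisely what the discussion preceding Lemma \ref{lem:limit-unique} handles: the module $\cM_L' = \cF_L^{-1}\cdot\cM_L$ is locally free of rank $1$ with empty co-support (by Hypothesis II), so $\bp_L$ extends analytically across $D_L$ and the sub-bundle $\bA_L$ has constant rank $d_L$; then Lemma \ref{lem:elementary-equal} upgrades the a priori inclusion $EV_{L,\by}' \subset EV_{L,\by}$ at a discriminant point to the equality $J_\by' = K_\by$ with $K_\by$ the reduced ideal of a type-$\be_L$ bouquet, forcing $EV_L'$ to be a bouquet bundle of type $\be_L$ over all of $\cU$. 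Everything else is a direct application of Corollary \ref{cor:essential} and routine real analytic linear algebra.
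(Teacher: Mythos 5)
Your proposal is correct and follows essentially the same route as the paper: part (i) is exactly an application of Corollary \ref{cor:essential} to the reduced bouquet bundle $EV_L'$, and parts (ii)--(iii) are deduced, as in the paper, from the pairwise orthogonality, the direct-sum decomposition, and the point-wise containment of the $\cE_i$ in eigen-spaces of $L$ (you merely spell out the Gram--Schmidt and evaluation steps the paper leaves implicit). The only imprecision is your assertion that $\cE_{i,\bx}$ \emph{is} an eigen-space at points of $D_L$ --- when eigen-values collide it is only \emph{contained} in one --- but since all that is needed is that the frame vectors are eigen-vectors, this does not affect the argument.
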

\begin{proof}
Point (i) is the statement of Corollary \ref{cor:essential}  which can be applied here.

\smallskip
Points (ii) and (iii) are obvious since the real analytic vector sub-bundles $\cE_i$ are pairwise
orthogonal, in direct sum and contained point-wise in the eigen-spaces of $L$.
\end{proof}
%
%
%
%
%
%
%
%
%
%
%
%
%
%
%
%
%
%
%
%
%
%
%
%
\section{General Model and Hypothesis 0}\label{section:GEM}
We are pursuing what was started in Section \ref{section:RM} but dropping hypotheses (I) and (II).
Our work here will be, by means of blowings-up, to recreate a situation
where we can work under these dropped hypotheses. 

\medskip
We fix once for all an orthonormal basis $\cB = (\bb_1,\ldots,\bb_n)$ of $\K^n$.
We recall that for $i<j$ the notations $\bb_{(i,j)}$ stands for the $2$-vector $\bb_i\wdg\bb_j\in\wdg^2\K^n$.

\smallskip
The normal operators $L(\bx)$ are given by complex normal (or real symmetric) matrices $M(\bx)$, in the basis $\cB$, whose 
coefficients are real analytic functions over $\cU$, thus global sections of $\cO_\cU$.
In order to have some work to do here we can assume that

\medskip\noindent 
{\bf Hypothesis.} \em The operator $L$ is not a multiple of the identity. \em

\medskip
The ideal $\cI_L$ of the coefficients of $L(\bx)$ is generated by finitely many 
global sections, thus is $\cO_\cU$-coherent. If $\chi_L (\bx;T)$ denotes the reduced characteristic 
polynomial of the normal matrix $M(\bx)$, then its discriminant locus (as a subset of $\cU$) 
is just $D_L$. Under Hypothesis 0, we have $V_L \subset D_L$ is of codimension one or more in $\cU$.

\medskip
Let, as before, be $Q: F\to \wedge^2 F$ defined as 
$$
Q(\bx;\bv) = M(\bx)\bv\wedge\bv = \sum_{i<j} Q_{i,j} \bb_{(i,j)}.
$$
Let again $\cA_L$ be the $\cO_\cU$-sub-module of sections of $\wdg^{d_L} S^2F^\vee = \cU \times \wdg^{d_L} H_2[X]$ 
generated by the  global sections $(Q_{i,j})_{1\leq i<j\leq n}$, thus is $\cO_\cU$-coherent.
Let $\cM_L := \wedge^{d_L}\cA_L$. The ideal $\cF_L$ of coefficients of $\cM_L$, is 
the maximal Fitting ideal of $\cA_L$, generated by the $d_L\times d_L$-minors of 
the $\frac{n(n-1)}{2}\times \frac{n(n+1)}{2}$-matrices of the vectors $Q_{i,j}$s, thus is also $\cO_\cU$-coherent
with co-support exactly $D_L$.
\begin{lemma}\label{lem:GEM-1}
There exists $\sgm:(\wtcU,\wtcE) \to (\cU,D_L)$ a locally finite composition of 
geometrically admissible blowings-up such that 

\smallskip
(i) the $\cO_\wtcU$-ideal $\sgm^*\cF_L$ is principal and monomial in the simple normal crossing divisor $\wtcE$.

\smallskip
(ii) The $\cO_\wtcU$-module $\cM_L' := (\sgm^*\cF_L)^{-1}\sgm^*\cM_L$ is locally free of rank $1$ with empty co-support.
\end{lemma}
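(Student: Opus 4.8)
The statement is essentially a direct application of the classical theorems on embedded resolution of singularities / principalization of ideal sheaves in the real analytic category, so the proof is short: I reduce the assertion to the applicability of those theorems to the coherent ideal $\cF_L$, and then read off (ii) as a formal consequence of (i).

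\textbf{Step 1: $\cF_L$ is a coherent ideal with co-support $D_L$.} As explained just before the statement, once an orthonormal/unitary basis $\cB$ of $\K^n$ is fixed, the mappings $\by\to Q_{i,j}(\by)$ are global real analytic sections of $S^2F^\vee = \cU\times H_2[X]$; hence the $\cO_\cU$-module $\cA_L$ they generate is coherent, $\cM_L = \wedge^{d_L}\cA_L$ is coherent, and $\cF_L$, being locally generated by the $d_L\times d_L$-minors of the $\tfrac{n(n-1)}{2}\times\tfrac{n(n+1)}{2}$-matrix of the $Q_{i,j}$, is a coherent $\cO_\cU$-ideal. The co-support of $\cM_L$ is exactly $D_L$: off $D_L$ the rank of $\cA_L$ is constant equal to $d_L$ (the dimension of $J_{\by,2}$ is the constant $d_L$ there by Lemma \ref{lem:elementary} and the constancy of $\be_L$), so the $d_L\times d_L$-minors do not all vanish; while at $\by\in D_L$ one has $d(\by)<d_L$ by Corollary \ref{cor:elementary}, so all such minors vanish. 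Thus $V(\cF_L) = D_L$, a closed real analytic subset of positive codimension (under the standing Hypothesis that $L$ is not a multiple of the identity, so $D_L\neq\cU$).

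\textbf{Step 2: apply principalization.} By the theorem of embedded resolution / principalization of ideals by blowings-up with geometrically admissible (smooth, normal crossing with the existing exceptional divisor) centers in the real analytic category (Hironaka, Bierstone--Milman), there exists a locally finite composition of geometrically admissible blowings-up $\sgm:\wtcU\to\cU$, with exceptional divisor $\wtcE$ a simple normal crossing divisor lying over $D_L$, such that the pulled-back ideal $\sgm^*\cF_L$ is principal and locally monomial in the coordinates adapted to $\wtcE$. (One works locally on the locally finite cover of $\cU$ and patches, using properness of each blowing-up; "locally finite" is inherited.) Since $\cF_L$ is coherent, the weak/controlled transform is well defined. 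This gives (i), and the pair notation $(\wtcU,\wtcE)\to(\cU,D_L)$ records that $\wtcE = \sgm^{-1}(D_L)$ and that the centers all sit over $D_L$.

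\textbf{Step 3: deduce (ii).} Write locally $\sgm^*\cF_L = (h)$ with $h$ a monomial in the local equations of $\wtcE$; this $h$ is a non-zero-divisor in $\cO_\wtcU$, so dividing the local generators of $\sgm^*\cM_L$ (pulled-back minors, all divisible by $h$) by $h$ yields well-defined local sections generating $\cM_L' := (\sgm^*\cF_L)^{-1}\sgm^*\cM_L$. By construction the ideal of coefficients of $\cM_L'$ is $(\sgm^*\cF_L)^{-1}\sgm^*\cF_L = \cO_\wtcU$, so $\cM_L'$ has empty co-support; and since $\sgm^*\cM_L = \sgm^*\wedge^{d_L}\cA_L$ has generic rank $1$ (a line in $\wedge^{d_L}S^2F^\vee$ off $\sgm^{-1}(D_L)$), its twist $\cM_L'$ is locally $\cO_\wtcU$-free of rank $1$ everywhere. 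Equivalently, the Pl\"ucker-valued map $\bp_L$ defined on $\cU\setminus D_L$ pulls back and extends real analytically across all of $\wtcU$. These local frames patch because $h$ is canonically attached to $\sgm^*\cF_L$, giving the global statement.

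\textbf{Main obstacle.} There is no real conceptual difficulty: the only point requiring care is the bookkeeping that the resolution theorem can be applied in a \emph{locally finite} fashion over the (possibly non-compact) base $\cU$ and that the transforms glue — this is standard but is the one place where a word is needed. Everything else (coherence of $\cF_L$, its co-support, the division in Step 3) is routine given the earlier lemmas.
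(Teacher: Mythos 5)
Your proof is correct and follows essentially the same route as the paper: point (i) is a direct application of embedded resolution/principalization of the coherent ideal $\cF_L$, and point (ii) follows by dividing the local generators of $\sgm^*\cM_L$ by the local generator of $\sgm^*\cF_L$, exactly as the paper does (referring back to the discussion of $\cM_L'$ and the extension of $\bp_L$ in Section \ref{section:RM}). Your Steps 1 and 3 simply spell out details the paper delegates to the paragraphs preceding the lemma and to Section \ref{section:RM}.
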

\begin{proof}
The first point is a direct application of the embedded resolution of ideals \cite{Hir,BiMi1}.

The module $\cM_L'$ has empty co-support by definition of $\cM_L$ and $\cF_L$.
It is locally free as was already seen in Section \ref{section:RM}.
\end{proof}

\medskip
We can now state the main result given the context of this section.
\begin{theorem}\label{thm:main-local}
Let $L:\cU \to F = \cU\times \K^n$ be a real analytic normal operator (real symmetric if $\K = \R$), where $\cU$ is a 
connected subset of $\R^u$. Assume that $L$ is not a multiple of identity.

There exists a locally finite composition of geometrically admissible blowings-up $\sgm:(\wtcU,\wtcE) \to (\cU,D_L)$
such that for any $\tldby \in \wtcU$, there exists $\wtcV$ a neighbourhood of $\tldby$ such that 
there exists real analytic sections of $(\sgm^* F)^\cdot$ which form an orthonormal/unitary basis of $L(\sgm(\bx))$ 
for each $\bx \in \wtcV$.
\end{theorem}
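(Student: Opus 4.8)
The plan is to reduce Theorem \ref{thm:main-local} to the situation already handled in Section \ref{section:RM}, i.e.\ to a context where Hypotheses 0, I and II all hold, by means of a locally finite composition of geometrically admissible blowings-up. We work throughout with the fixed orthonormal basis $\cB$, the map $Q:F\to\wdg^2 F$, the coherent modules $\cA_L$, $\cM_L=\wdg^{d_L}\cA_L$, and the maximal Fitting ideal $\cF_L$ of $\cA_L$, whose co-support is exactly $D_L$. Hypothesis 0 is already granted in the statement (complex normal, resp.\ real symmetric), and Hypothesis I is the standing assumption that $L$ is not a multiple of the identity.

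First I would apply Lemma \ref{lem:GEM-1}: there is a locally finite composition of geometrically admissible blowings-up $\sgm:(\wtcU,\wtcE)\to(\cU,D_L)$ making $\sgm^*\cF_L$ principal (and monomial in $\wtcE$), so that $\cM_L':=(\sgm^*\cF_L)^{-1}\sgm^*\cM_L$ is locally free of rank $1$ with empty co-support; thus $\sgm^*L$ satisfies Hypothesis II. There is a subtlety: $\sgm^*L$ might become a multiple of the identity on some component of $\wtcU$ (violating Hypothesis I there), but on any such component the conclusion of the theorem is trivial, since \emph{any} orthonormal/unitary frame of $\sgm^*F$ diagonalizes $\sgm^*L$; on the complementary open set the pulled-back operator is still not a multiple of identity and is still normal (resp.\ symmetric), because these are pointwise algebraic conditions preserved by composition with $\sgm$. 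So we may freely restrict attention to the open locus where Hypotheses 0, I, II all hold.

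Next, on that locus, I would invoke the machinery of Section \ref{section:RM} verbatim. Working locally over a neighbourhood $\cV$ of a given point of $\wtcU$, the module $\cM_L'$ lifts (through the Pl\"ucker embedding and the universal bundle of $\bG(d_L,S^2F^\vee)$) to a real analytic rank-$d_L$ sub-bundle $\bA_L$ of $S^2F^\vee$ extending the map $\bp_L$ defined off $D_L$ by $\wdg^{d_L}Q_L$. This yields a real analytic family of quadratic ideals $(J_\by')_{\by\in\cV}$: off $D_L$ they coincide with the eigen-bouquet ideals $J_\by$, and on $D_L$, by the limit argument (taking a sequence $\bx_k\to\by$, extracting convergent limits of the eigen-spaces, and using Lemma \ref{lem:elementary-equal} together with Corollary \ref{cor:elementary}), $J_\by'$ equals the reduced ideal $K_\by$ of a bouquet of pairwise orthogonal subspaces in direct sum, each contained in an eigen-space of $L(\sgm(\by))$, with ordered dimension tuple $\be_L$. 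The family $(J_\by')$ therefore satisfies the hypotheses of the Essential Lemma \ref{lem:essential}: hence $EV_L'$ is a real analytic submanifold of $\dot F$, $\bP EV_L'$ splits into $s_L$ connected real analytic sub-bundles $\cP_1,\ldots,\cP_{s_L}$ of $\bP F$, and by Corollary \ref{cor:regular} each cone-bundle $\cE_i=Cone(\cP_i)$ is a real analytic vector sub-bundle of $\sgm^*F$ over $\cV$, the $\cE_i$ are pairwise orthogonal and in direct sum, and the restriction of $L\circ\sgm$ to each $\cE_i$ is a multiple of the identity. Picking a real analytic orthonormal/unitary frame of each $\cE_i$ (shrinking $\cV$ if necessary, using local triviality of vector bundles and Gram--Schmidt) and concatenating gives the desired orthonormal/unitary frame of $(\sgm^*F)^\cdot$ in which $L\circ\sgm$ is diagonal, which is Corollary \ref{cor:regular}(ii)--(iii) in the present setting.

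The main obstacle, and the only genuinely delicate point, is the passage to the limit across $D_L$: one must ensure that after principalizing $\cF_L$ the divided module $\cM_L'$ really does ``see'' a single well-defined limiting quadratic ideal at each discriminant point — rather than, a priori, a higher-dimensional or non-reduced degeneration — and that this limit ideal is exactly the reduced ideal of a direct-sum bouquet of the correct type $\be_L$. This is precisely where Lemma \ref{lem:elementary-equal} (forcing $J_\by'=K_\by$ once the dimensions of the quadratic parts agree and $J_\by'\subseteq K_\by$) and the dimension-drop Corollary \ref{cor:elementary} (excluding coarser limit decompositions) do the work; the principalization of $\cF_L$ is what guarantees $\dim J_{\by,2}'=d_L$ at \emph{every} point, which is the hypothesis those lemmas need. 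Everything else is a routine assembly of earlier results.
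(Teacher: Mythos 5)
Your proposal is correct and follows essentially the same route as the paper: principalize the Fitting ideal $\cF_L$ via Lemma \ref{lem:GEM-1}, then run the Section \ref{section:RM} machinery (lift of $\cM_L'$ through the Pl\"ucker embedding, the limit argument at discriminant points via Lemma \ref{lem:elementary-equal}, the Essential Lemma, and Corollary \ref{cor:regular}) to obtain the orthogonal eigen-sub-bundles and local frames. The only difference is that you spell out explicitly what the paper compresses into ``proceed as in Section \ref{section:RM}''; your worry about components where $\sgm^*L$ becomes a multiple of the identity is harmless but moot, since $\wtcU$ remains connected under blowings-up.
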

\begin{proof}
Let $\sgm$ be that of Lemma \ref{lem:GEM-1}.

From here on we proceed as in Section \ref{section:RM}. Indeed all we need to do from now 
is of local nature, thus was done there, and will work as well here,
since $\cM_L'$ has the exact same properties.
By Corollary \ref{cor:regular}, for each $\tldby\in\wtcU$ there exists 
a neighbourhood $\wtcV$ of $\tldby$ such that 
there exists $R_1,\ldots,R_{s_L}$ real analytic vector sub-bundles of $\sgm^* F|_\wtcV$
which are contained in $\sgm^*EV_L$ and such that they form an orthogonal
direct sum of $\sgm^* F|_\wtcV$. Shrinking $\wtcV$ if necessary so that it is a chart domain of 
each $R_i$, those vector bundles are all real analytically trivial over $\wtcV$, thus the statement.
\end{proof}
Now we deduce the following
\begin{corollary}\label{cor:eigen-value-regular}
For any $\wtby \in \wtcU$, there exists $\wtcV$ a neighbourhood of $\tldby$ such that 
the eigen-values of $\wtbx\to L(\sgm(\wtbx))$ are real analytic.
\end{corollary}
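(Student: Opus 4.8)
The plan is to deduce Corollary~\ref{cor:eigen-value-regular} directly from Theorem~\ref{thm:main-local} by exploiting the \emph{Observation 2)} recorded in the introduction: evaluating a real analytic operator along a real analytic frame automatically produces real analytic eigen-values. So the proof is essentially a one-line reduction, and the only thing to do is to make that reduction explicit.

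Concretely, fix $\wtby \in \wtcU$, and let $\sgm$ be the composition of blowings-up furnished by Theorem~\ref{thm:main-local}. By that theorem there is a neighbourhood $\wtcV$ of $\wtby$ and real analytic sections $\bb_1,\ldots,\bb_n$ of $(\sgm^*F)^\cdot|_{\wtcV}$ which at every $\wtbx \in \wtcV$ form an orthonormal/unitary basis of the fiber in which $L(\sgm(\wtbx))$ is reduced. First I would write $M(\wtbx)$ for the matrix of $L(\sgm(\wtbx))$ in the frame $(\bb_i(\wtbx))_i$: its entries are $\langle L(\sgm(\wtbx))\bb_j(\wtbx),\bb_i(\wtbx)\rangle$, hence real analytic functions on $\wtcV$, being compositions and pairings of real analytic objects (here $\sgm$ is real analytic, $L$ is a real analytic section of $\Hom(F,F)$, the fibered metric is real analytic, and the $\bb_i$ are real analytic). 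Since the frame is reduced, $M(\wtbx)$ is block-diagonal with blocks that are either scalar multiples $a(\wtbx)\,Id$ (giving a real eigen-value $a(\wtbx)$, which is literally one of the above matrix entries, hence real analytic), or, in the real non-symmetric case, $2\times 2$ similitude blocks $\begin{pmatrix} a(\wtbx) & -b(\wtbx)\\ b(\wtbx) & a(\wtbx)\end{pmatrix}$ whose complex eigen-values $a(\wtbx)\pm\sqrt{-1}\,b(\wtbx)$ are again real analytic in $\wtbx$ because $a$ and $b$ are individual matrix entries. In all cases the (complex) eigen-values of $\wtbx \to L(\sgm(\wtbx))$ are, on $\wtcV$, given by real analytic functions, which is the assertion.

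I expect no real obstacle here — the work has all been done in Theorem~\ref{thm:main-local}. The only mild subtlety worth a sentence is the real non-symmetric case, where ``reduced'' does not mean diagonal: one must recall from Section~\ref{section:ANO} that a reduced real normal operator splits as $R \oplus C_1 \oplus \cdots \oplus C_t$ with each $C_i$ an ARCP on which $L$ acts as a similitude, so that the complex eigen-values emerge as $a_i \pm \sqrt{-1}\,b_i$ with $a_i,b_i$ real analytic; this is exactly what makes the ``eigen-values'' (understood as the roots of the reduced characteristic polynomial over $\C$) real analytic rather than merely the real eigen-values. One could phrase the proof to cover both fields uniformly by simply saying: the reduced matrix $M(\wtbx)$ has real analytic entries, its characteristic polynomial therefore has real analytic coefficients, and by the block structure this polynomial factors into linear (or irreducible real quadratic) factors with real analytic coefficients, whence its roots are real analytic. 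That is the version I would commit to the paper, keeping the argument to a short paragraph.
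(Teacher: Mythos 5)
Your proof is correct and is essentially the paper's own argument: the paper likewise disposes of the corollary in one line by evaluating $L\circ\sgm$ along the real analytic eigen-frame (equivalently, the sub-bundles $R_i$) produced by Theorem~\ref{thm:main-local}. The only superfluous part is your discussion of the real non-symmetric similitude blocks, which is excluded in this section by Hypothesis 0 (the real case here is symmetric) and is treated separately in Section~\ref{section:RC}.
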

\begin{proof}
Just evaluate the mapping $(\wtbx,\bv) \to \frac{L(\sgm(\wtbx))\bv}{|\bv|}$ over each $R_i\setminus 0$.
\end{proof}
\begin{remark}\label{rk:one-dim} 
If the parameter space were to be of dimension $1$, that is $u=1$, the discriminant locus $D_L$ is discrete.
Up to locally dividing $\cF_L$ by a real analytic function germ vanishing at a discriminant point $\bt_0$, 
our presentation recover the results of Rellich and Kato, with little work.
\end{remark}
%
%
%
%
%
%
%
%
%
%
%
%
%
%
%
%
%
%
%
%
%
%
%
%
\section{General Case}\label{section:GC}
Let $N$ be a connected real analytic manifold of dimension $u\geq 1$.

Let $F$ be a real analytic $\K$-vector bundle over $N$ equipped with a real analytic fiber metric
structure, 
that is endowed with a real analytic section $N \to S^2F^\vee$, 
which is positive definite ($\K=\R)$ or Hermitian ($\K=\C$) over every fiber of $F$. 
The local model of $F$ is $\R^u\times\K^n$ but with transitions taken into 
the orthogonal group $O(\R^n)$ or the unitary group $U(\C^n)$.

\medskip
Let $L$ be a real analytic normal operator $F\to F$, which we suppose symmetric if $\K =\R$.
Since $N$ is connected, the set of points where the number of distinct eigen-values 
of $L$ is not locally maximal is a real analytic subset $D_L$ of $N$ of positive
codimension. Let $s_L$ be this maximal number of distinct eigen-values.

Let $\cU$ be an open subset of $N$ over which $F$ is trivial. 
We can then choose an orthonormal basis $\cB^\cU$, and then do
as in Section \ref{section:GEM} for $F|_\cU$. 
Let $L_\cU$ be the restriction of $L$ to $F|_\cU$.
Let $Q_\cU$ and $Q_{(i,j),\cU}$ be $Q$ and $Q_{i,j}$ of Section \ref{section:GEM} 
for $L_\cU$.
In particular we can construct a coherent $\cO_\cU$-module $\cA_\cU := \cA_{L_\cU}$
which is locally free of rank $d_\cU$. 
Note that all these $d_\cU$ are equal to $d_L \geq 1$ since $D_L$ is nowhere dense in $N$.
Let define $\cM_\cU := \wedge^{d_L}\cA_\cU$
which is $\cO_\cU$-coherent.
Let $\cF_\cU$ be the coherent $\cO_\cU$-ideal of coefficients of $\cM_\cU$, defined
globally over $\cU$, since generated by the $d_L\times d_L$-minors of the matrix of the $Q_{(i,j),\cU}$.

Let $\cW_1$ and $\cW_2$ be two open subsets of $N$ intersecting each other and over each of which 
exist respectively a basis $\cB_1$ and a basis $\cB_2$. 
Taking trivializations of $F$ over $\cW_1$ and over $\cW_2$ we see that $\cM_{\cW_1}|_{\cW_1\cap\cW_2}$
and $\cM_{\cW_2}|_{\cW_1\cap\cW_2}$ are isomorphic. 
Let $P$ be the orthogonal/unitary matrix passing from the basis over $\cW_1$ to the basis over $\cW_2$.
Thus at any point $\by\in\cW_1\cap\cW_2$, we check that $P$ induces a linear automorphism $P_2[n]$
over $H_2[X]$ such that the vector subspace of $S^2F^\vee_\by$ spanned by $(Q_{(i,j),\cW_1}(\by))_{i<j}$ is 
mapped onto the vector subspace spanned by $(Q_{(i,j),\cW_2}(\by))_{i<j}$.
In particular this implies that we can extend each $\cM_{\cW_i}$ as a $\cO_{\cW_1\cup\cW_2}$-module, 
which is still coherent.
Thus we produce a coherent $\cO_N$-module $\cM_L$ 
vanishing only over $D_L$. And its coefficient ideal $\cF_L$ is also $\cO_N$-coherent.

\medskip 
And we arrive at the main result of the paper:
\begin{theorem}\label{thm:main-global}
Let $L:F \to F$ be a real analytic  normal operator over a real analytic $\K$-vector bundle
$F$ of finite rank over a real analytic and connected manifold $N$, and equipped with a real analytic fiber 
metric. 

\smallskip
There exists a locally finite composition of geometrically admissible blowings-up $\sgm:(\wtN,\wtE) \to (N,D_L)$
such that for any $\tldby \in \wtN$, there exists $\wtcV$ a neighbourhood of $\tldby$ and
real analytic vector sub-bundles $R_1,\ldots,R_s$ of $\sgm^*F|_\wtcV$ such that 

(a) they are pair-wise orthogonal and everywhere in direct sum;

(b) The restriction of $L\circ\sgm$ to each $R_i$ is reduced: either it is a multiple
of identity, or $R_i$ is real of real rank 2, and the restriction of $L\circ\sgm$
is a fiber-wise a similitude. 
\end{theorem}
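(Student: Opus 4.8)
The plan is to globalize the local results from Sections \ref{section:RM} and \ref{section:GEM} by a standard patching argument, using the coherent $\cO_N$-module $\cM_L$ and its Fitting ideal $\cF_L$ constructed just above the statement. First I would invoke the classical embedded resolution of singularities \cite{Hir,BiMi1} to produce the locally finite composition of geometrically admissible blowings-up $\sgm:(\wtN,\wtE)\to(N,D_L)$ which principalizes $\cF_L$; since $\cF_L$ is $\cO_N$-coherent and globally defined, such a $\sgm$ exists, and the weak transform $\cM_L' := (\sgm^*\cF_L)^{-1}\cdot\sgm^*\cM_L$ is locally free of rank $1$ with empty co-support, exactly as in Lemma \ref{lem:GEM-1}. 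The point is that this is now a statement on $\wtN$, and everything that follows is local there.

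Next I would take any $\tldby\in\wtN$ and a trivializing chart domain $\wtcV$ of $\sgm^*F$ small enough that $\cM_L'|_\wtcV$ is free of rank $1$; on such a chart we are precisely in the situation of Section \ref{section:RM} applied to $L\circ\sgm$. The subtlety here compared with Theorem \ref{thm:main-local} is Hypothesis I (that $L$ is not a multiple of identity): if on the original chart $L$ were a multiple of identity we take $R_1 = \sgm^*F|_\wtcV$ and (b) holds trivially; otherwise $\cM_L'$ carries the nontrivial information. Lifting the extended Plücker section $\bp_L$ to the universal bundle of $\bG(d_L,S^2F^\vee)$ gives the real analytic vector sub-bundle $\bA_L$ of $S^2F^\vee$ of rank $d_L$, hence a real analytic family $(J_\by')_\by$ of reduced quadratic ideals whose zero loci form bouquets of pairwise orthogonal subspaces in direct sum, each contained in an eigen-space of $L\circ\sgm$ by the limiting argument (Lemma \ref{lem:elementary-equal}) preceding Lemma \ref{lem:limit-unique}. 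Then Lemma \ref{lem:regular} and Corollary \ref{cor:regular}(i) apply verbatim to $EV_L'$ for $L\circ\sgm$ over $\wtcV$, giving the cone-bundles $\cE_1,\ldots,\cE_{s_L}$; shrinking $\wtcV$ to a common trivializing domain, set $R_i := \cE_i$, which yields (a). For (b), evaluating $L\circ\sgm$ on each $R_i$: in the complex case each $R_i$ is an eigen-space so the restriction is a multiple of identity; in the real symmetric case likewise. The genuinely real (non-diagonalizable) case is then handled as in Section \ref{section:RC} by the decomposition $L = A + B$ with $A$ symmetric and $B$ anti-symmetric and the operator $B_2 = J(B\oplus B)$ of Lemma \ref{lem:plane-invariant}: applying the symmetric case to $A$ and to $BB^* = -B^2$ refines the $R_i$ into pieces on which $A$ and $BB^*$ are both scalar, and Lemma \ref{lem:plane-invariant}(2) then splits each such piece into ARCPs of real rank $2$ on which $L\circ\sgm$ is fiber-wise a similitude, giving the second alternative in (b).

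The main obstacle I expect is the patching step that produces the global coherent module $\cM_L$ and global ideal $\cF_L$ in a chart-independent way: the matrices $Q_{(i,j),\cW}$ depend on the chosen orthonormal frame, and one must check that the transition matrices $P\in O(\R^n)$ or $U(\C^n)$ between charts induce, via the second symmetric power $P_2[n]$ on $H_2[X]$, linear automorphisms carrying the subspace $J_{\by,2}$ for one chart onto that for the other — this is done just above the statement, but it is what makes $\cM_L$ (and then $\cF_L$) well defined globally and hence makes the resolution $\sgm$ act on all of $N$ at once. A secondary technical point is ensuring the blowings-up remain \emph{geometrically admissible} (smooth centers, normal crossings with the accumulated exceptional divisor) throughout the locally finite composition; this is guaranteed by the cited embedded resolution theorems but must be invoked with $D_L$ as the distinguished center/divisor pair $(N,D_L)$. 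Once these are in place, the rest is a routine transcription of the local arguments of Sections \ref{section:RM}--\ref{section:RC} to each chart $\wtcV$ on $\wtN$.
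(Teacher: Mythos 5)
Your proposal is correct and follows essentially the same route as the paper: globalize the coherent module $\cM_L$ and its Fitting ideal $\cF_L$ via the orthogonal/unitary transition matrices, principalize $\cF_L$ by embedded resolution, run the local argument of Section \ref{section:RM} (Pl\"ucker lift, reduced eigen-bouquet bundle, Corollary \ref{cor:regular}) on each chart of $\wtN$, and reduce the real non-symmetric case to the symmetric one through the doubled operator $B_2$ and Lemma \ref{lem:plane-invariant}. The subtleties you flag (chart-independence of $\cM_L$ and geometric admissibility of the centers) are exactly the ones the paper addresses just before the statement and in Lemma \ref{lem:GEM-1}.
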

And we deduce the obvious
\begin{corollary}\label{cor:main-global}
For any $\tldby \in \wtN$, there exists $\wtcV$ a neighbourhood of $\tldby$ such that 
the eigen-values of $\wtbx\to L(\sgm(\wtbx))$ are real analytic.
\end{corollary}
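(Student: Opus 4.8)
The plan is to reduce the global statement to the local, trivial-bundle case already established in Theorem~\ref{thm:main-local} (where $\K=\C$ normal or $\K=\R$ symmetric), by two gluing steps: one to manufacture global coherent data over $N$, and one to pass from the symmetric/complex-normal situation to an arbitrary real normal operator.

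First I would globalize the construction of Section~\ref{section:GEM}. Cover $N$ by open sets $\cU$ over which $F$ is trivial and carries a real analytic orthonormal/unitary frame $\cB^\cU$; over each, form $Q_\cU$, its components $Q_{(i,j),\cU}\in\Gamma(\cU,S^2F^\vee)$, the coherent $\cO_\cU$-module $\cA_\cU$ they generate (locally free of rank $d_L$, since $D_L$ is nowhere dense so the generic dimension of the quadratic part of the eigen-bouquet ideal equals $d_L$ on every chart), and $\cM_\cU:=\wdg^{d_L}\cA_\cU$. On an overlap $\cW_1\cap\cW_2$ the change of orthonormal/unitary frame is a matrix in $O(n)$ or $U(n)$, which induces an automorphism $P_2[n]$ of $H_2[X]$ carrying, fiber-wise, the span of $(Q_{(i,j),\cW_1})_{i<j}$ onto that of $(Q_{(i,j),\cW_2})_{i<j}$; these isomorphisms $\cM_{\cW_1}\simeq\cM_{\cW_2}$ are compatible, so the $\cM_\cU$ glue to a coherent $\cO_N$-module $\cM_L$ whose ideal of coefficients $\cF_L$ is $\cO_N$-coherent with co-support exactly $D_L$. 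Applying embedded resolution of ideals \cite{Hir,BiMi1} to $\cF_L$ produces a locally finite composition of geometrically admissible blowings-up $\sgm:(\wtN,\wtE)\to(N,D_L)$ with $\sgm^*\cF_L$ locally principal, so that $\cM_L':=(\sgm^*\cF_L)^{-1}\cdot\sgm^*\cM_L$ is locally free of rank $1$ with empty co-support (Lemma~\ref{lem:GEM-1}) — exactly the post--Hypothesis~II situation of Section~\ref{section:RM}, but now on $\wtN$.

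Now the local argument of Section~\ref{section:RM} applies verbatim to $\sgm^*F$, since $\cM_L'$ has the same properties: near any $\tldby\in\wtN$, in a trivializing orthonormal/unitary chart, $\cM_L'$ yields a real analytic section into $\bP(\wdg^{d_L}S^2F^\vee)$ factoring through the Plücker embedding of $\bG(d_L,S^2F^\vee)$, hence a real analytic rank-$d_L$ sub-bundle $\bA_L$ and a real analytic family of reduced quadratic ideals $(J_\bx')_\bx$; these cut out the eigen-bouquets of $L\circ\sgm$ outside $\sgm^{-1}(D_L)$, while at a discriminant point $\by$ a convergent sequence of eigen-spaces from a neighbouring component of the complement produces a limiting bouquet of pairwise orthogonal subspaces in orthogonal direct sum along which $L\circ\sgm(\by)$ is reduced, forcing $J_\by'=K_\by$ by Lemma~\ref{lem:elementary-equal}. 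Thus $(J_\bx')_\bx$ meets the hypotheses of the Essential Lemma~\ref{lem:essential}, and Corollary~\ref{cor:essential} (equivalently Corollary~\ref{cor:regular}) gives, on a neighbourhood $\wtcV$ of $\tldby$, pairwise orthogonal real analytic vector sub-bundles $\cE_1,\ldots,\cE_{s_L}$ of $\sgm^*F|_{\wtcV}$, everywhere in direct sum, each contained point-wise in an eigen-space of $L\circ\sgm$; shrinking $\wtcV$ so each $\cE_i$ is trivial gives the required $R_i$, with $L\circ\sgm|_{R_i}$ a multiple of the identity. This proves the theorem when $\K=\C$ normal or $\K=\R$ symmetric.

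For an arbitrary real normal $L$, write $L=A+B$ with $A=\tfrac{L+L^*}{2}$ symmetric and $B=\tfrac{L-L^*}{2}$ anti-symmetric; normality makes $A$ commute with the symmetric positive semi-definite operator $BB^*=-B^2$. Running the previous steps on the commuting pair $(A,BB^*)$ — i.e. on the family of quadratic ideals cutting out their common characteristic bouquet — produces, after a further locally finite composition of geometrically admissible blowings-up, real analytic sub-bundles on which simultaneously $A=a\,\mathrm{Id}$ and $BB^*=-b^2\,\mathrm{Id}$, as in the displayed identities of Section~\ref{section:ANO}. On such a sub-bundle $L=a\,\mathrm{Id}+B$ with $B^2=-b^2\,\mathrm{Id}$: if $b\equiv 0$ then $B=0$ and $L$ is a multiple of the identity there; if $b\neq 0$, the operator $B_2$ and Lemma~\ref{lem:plane-invariant} split the sub-bundle — analytically, again via the Essential Lemma applied to the eigen-bouquet of $B_2$, and after possibly further geometrically admissible blowings-up to extract a square root of $b^2$ — into rank-$2$ sub-bundles, each an ARCP along which $L$ is fiber-wise a similitude. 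These are the $R_i$, and Corollary~\ref{cor:main-global} then follows at once by evaluating $(\wtbx,\bv)\mapsto |L(\sgm(\wtbx))\bv|/|\bv|$ on each $R_i\setminus 0$ (reading off $\sqrt{a^2+b^2}$ on a similitude block). I expect the genuine obstacle to be the one already faced in Section~\ref{section:RM}: controlling the discriminant locus after resolution, i.e. proving $J_\by'=K_\by$ there so that the direct-sum and orthogonality hypotheses of the Essential Lemma genuinely hold; the gluing of the first step and the $B_2$-bookkeeping of the last are more routine, though keeping the entire tower of blowings-up locally finite and geometrically admissible throughout still requires care.
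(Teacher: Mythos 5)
Your proposal follows the paper's route essentially step for step: glue the locally defined modules $\cM_\cU$ into a coherent $\cO_N$-module, principalize the Fitting ideal $\cF_L$ by a locally finite composition of geometrically admissible blowings-up, run the Section~\ref{section:RM} machinery (Pl\"ucker section, reduced quadratic ideals $J'_\bx$, Essential Lemma~\ref{lem:essential}) on $\wtN$ to obtain the sub-bundles $R_i$ of Theorem~\ref{thm:main-global}, and then evaluate $L\circ\sgm$ along them. The one genuinely different choice is in the real non-symmetric case, where you resolve the commuting pair $(A,BB^*)$ simultaneously via their common characteristic bouquet, whereas the paper first settles the symmetric case and then reduces the anti-symmetric part $B$ through the doubled symmetric operator $B_2$ on $F\oplus F$ and Lemma~\ref{lem:plane-invariant}; both work (yours needs the easy remark that the ideal generated by the components of $Q_A$ and $Q_{BB^*}$ together cuts out the common bouquet, which is still in orthogonal direct sum, so the Essential Lemma applies), and the difference is cosmetic.

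The only step that actually proves the corollary, however, is the final evaluation, and as written it does not quite do so: $(\wtbx,\bv)\mapsto |L(\sgm(\wtbx))\bv|/|\bv|$ returns the \emph{modulus} of the eigen-value. On a block where $L\circ\sgm=\lambda\,\mathrm{Id}$ with $\lambda$ changing sign, $|\lambda|$ fails to be analytic even though $\lambda$ is, so computing the modulus neither produces the eigen-value nor certifies its analyticity; likewise $\sqrt{a^2+b^2}$ is not an eigen-value of a similitude block. The repair is immediate and is what the paper does: evaluate the vector-valued map $\bv\mapsto L(\sgm(\wtbx))\bv/|\bv|$ along a non-vanishing local analytic section $e$ of $R_i$, i.e.\ read off $\lambda=\langle L(\sgm(\wtbx))e,e\rangle/|e|^2$ on a multiple-of-identity block, and on a rank-two similitude block read $a$ and $b$ off the matrix of $L\circ\sgm|_{R_i}$ in a local orthonormal frame, giving the analytic complex eigen-values $a\pm\sqrt{-1}\,b$. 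With that adjustment your argument coincides with the paper's.
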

\begin{proof}[Proof of Theorem \ref{thm:main-global}]
The proof of the real and non symmetric case will be dealt with in Section \ref{section:RC}

\medskip
First if $s_L = 1$, that is $L$ is a multiple of the identity operator then there is nothing to do.

\medskip\noindent
Assume that $s_L\geq 2$, so that $D_L$ is not empty. 

\smallskip\noindent
Assume also that $L$ is complex normal or real symmetric.

The ideal $\cF_L$ is $\cO_N$-coherent, there exists a locally finite composition of geometrically admissible 
blowings-up $\sgm:(\wtcU,\wtcE) \to (N,D_L)$ such that the ideal for any $\tldby \in \wtcU$, there exists $\wtcV$ 
a neighbourhood of $\tldby$ such that there exists real analytic sections of $(\sgm^* F)^\cdot$ which form an 
orthonormal/unitary basis of $L(\sgm(\wtbx))$ for each $\wtbx \in \wtcV$.

And the rest is as we did in Section \ref{section:RM} and the proof of Theorem \ref{thm:main-local}.

\medskip
As for the second point, it also works as in Corollary \ref{cor:eigen-value-regular}
\end{proof}
\begin{remark}
The reader will have noticed that we never uses the condition of monomialization of the ideals we desingularize. 
We just need to have there weak transforms principal.
\end{remark}
The next remark is inspired by our original motivations (see \cite{Gra}).
\begin{remark}\label{rmk:invertible-sheaf}
What we did here for a normal operator of $Hom(F,F)$ also holds true for a real analytic (coherent) invertible $\cO_N$-sheaf
of $Hom(F,F)$ locally generated by a normal operator.
In such a setting eigen-values are not objects attached to the invertible sheaf. 
\end{remark}
%
%
%
%
%
%
%
%
%
%
%
%
%
%
%
%
%
%
%
%
%
%
\section{Real case}\label{section:RC}
We complete the proof of Theorem \ref{thm:main-global}, treating the real 
non-symmetric case. 
\begin{theorem}\label{thm:main-real}
Let $L:F \to F$ be a real analytic  normal operator over a real analytic $\R$-vector bundle
$F$ over a real analytic and connected manifold $N$, and equipped with a real analytic fiber Euclidean structure. 

\smallskip
(i) There exists a locally finite composition of geometrically admissible blowings-up $\sgm:(\wtN,\wtE) \to (\cU,D_L)$
such that for any $\tldby \in \wtN$, there exist $\wtcV$ a neighbourhood of $\tldby$ and
 real analytic sections of $\sgm^* F$ which form an orthonormal basis of characteristic 
vectors of $L(\sgm(\bx))$ for each $\bx \in \wtcV$.

\smallskip
(ii) For any $\tldby \in \wtN$, there exists $\wtcV$ a neighbourhood of $\tldby$ such that 
the eigen-values of $\wtbx\to L(\sgm(\wtbx))$ are real analytic.
\end{theorem}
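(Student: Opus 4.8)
The plan is to reduce the real non-symmetric case to the real symmetric case already established in Theorem~\ref{thm:main-global}, using the canonical decomposition recalled in Section~\ref{section:ANO}. Write $L = A + B$ with $A = \frac{L+L^*}{2}$ symmetric and $B = \frac{L-L^*}{2}$ anti-symmetric, both real analytic operators on $F$. Since $L$ is normal, $A$ and $B$ commute, and so do $A$ and $BB^*=-B^2$, the latter being a real analytic \emph{symmetric} operator on $F$. First I would apply Theorem~\ref{thm:main-global} (symmetric case) to the pair of commuting symmetric operators $A$ and $BB^*$ simultaneously. Concretely, one runs the construction of Sections~\ref{section:RM}--\ref{section:GC} on the operator $BB^*$ (or, to separate all joint eigenvalues at once, on a generic real linear combination $A + tBB^*$ which is symmetric): there is a locally finite composition of geometrically admissible blowings-up $\sgm:(\wtN,\wtE)\to(N,D_L)$ after which $\sgm^*F$ splits, over a neighbourhood $\wtcV$ of any point, into pairwise orthogonal real analytic sub-bundles $W_1,\ldots,W_m$ that are simultaneously reducing for $\sgm^*A$ and $\sgm^*(BB^*)$, with $\sgm^*A|_{W_k} = a_k\,\mathrm{Id}$ and $\sgm^*(BB^*)|_{W_k} = -b_k^2\,\mathrm{Id}$ for real analytic functions $a_k, b_k$ (here I would shrink $\wtcV$ and, if needed, blow up further so that the function $b_k^2$ becomes a square of a real analytic function $b_k$ — this is the point where an extra principalization is used, exactly in the spirit of Remark~\ref{rk:one-dim}).

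Next, on each block $W_k$ the restriction of $\sgm^*L$ is $a_k\,\mathrm{Id} + B_k$ with $B_k := \sgm^*B|_{W_k}$ anti-symmetric and $B_kB_k^* = b_k^2\,\mathrm{Id}$. If $b_k\equiv 0$ on $\wtcV$ then $B_k = 0$ and $\sgm^*L|_{W_k} = a_k\,\mathrm{Id}$ is already reduced; pick any real analytic orthonormal frame of $W_k$. If $b_k$ is not identically zero, I would use Lemma~\ref{lem:plane-invariant} applied fibrewise: $W_k$ is even-dimensional and $B_k$ restricted to it has $\pm\sqrt{-1}\,b_k$ as its only eigenvalues on the complexification. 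The operator $J_k(B_k\oplus B_k)$ on $W_k\oplus W_k$ has $\pm b_k$ as eigenvalues, and the $b_k$-eigenspace is a real analytic sub-bundle (it is the kernel of the real analytic operator $(B_2)^2 - b_k^2\,\mathrm{Id}$ restricted appropriately, of locally constant rank after the resolution since the relevant Fitting-type ideal has been principalized). Splitting off pairs $\R\bu + \R\bv$ as in part~2(i) of Lemma~\ref{lem:plane-invariant} — with $\bu,\bv$ orthogonal and $\R\bu+\R\bv$ invariant by $B_k$, hence by $\sgm^*L$ — gives a real analytic orthogonal decomposition of $W_k$ into real rank-$2$ planes on which $\sgm^*L$ acts as the similitude $a_k\,\mathrm{Id} + b_k\,(\text{rotation by }\pi/2)$. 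Normalizing $(\bu,\bv)$ to an orthonormal pair yields the desired real analytic orthonormal frame of characteristic vectors locally.

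The main technical obstacle I anticipate is the regularity of the planes $\R\bu + \R\bv$ inside each $W_k$: extracting a \emph{single} real analytic orthonormal frame that puts $B_k$ in block form is again a problem of the same flavour as the original one (a real analytic choice of a splitting of a vector bundle into $B_k$-invariant planes), and one should not expect it without possibly a further finite composition of blowings-up of $\wtN$. The clean way to handle this is to observe that $B_k/b_k$ (defined where $b_k\neq 0$, and which one checks extends real analytically across $\{b_k=0\}\cap\wtcV$ after the resolution, since $b_k$ is a local monomial and $B_k$ vanishes to at least the same order) is an orthogonal complex structure on $W_k$, and then the problem of finding a compatible orthonormal frame is the standard one for a real analytic almost complex structure, solvable locally after, if necessary, one more geometrically admissible blowing-up. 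Collecting these local frames over all blocks $W_1,\ldots,W_m$ and over the charts of a locally finite cover, and composing all the blowings-up used (the original $\sgm$ together with the finitely many extra ones needed to take square roots of the $b_k^2$ and to trivialize the complex structures), produces the locally finite composition of geometrically admissible blowings-up asserted in~(i). Finally, statement~(ii) follows exactly as in Corollary~\ref{cor:eigen-value-regular} and Corollary~\ref{cor:main-global}: evaluate $\wtbx \to L(\sgm(\wtbx))$ along the real analytic frame just constructed, reading off the eigenvalues $a_k \pm \sqrt{-1}\,b_k$ of the complexification directly from the blocks.
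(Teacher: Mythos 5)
Your reduction to the symmetric case is sound in outline, but it is a genuinely different route from the paper's. The paper never touches $BB^*$ or square roots: after disposing of the symmetric part, it doubles the bundle and applies the symmetric theorem to the single symmetric operator $B_2=J(B\oplus B)$ on $F\oplus F$, whose eigenvalues are $\pm b$ rather than $b^2$, so the functions $b$ come out real analytic for free; the $B$-invariant planes $\R\bu+\R\bv$ are then read off directly from the eigen-sections $\bu\oplus\bv$ of $B_2$ via Lemma \ref{lem:plane-invariant}, with no division by $b$ and no auxiliary complex structure. Your route, working with $BB^*$ on $F$ itself, is more direct but pays for it with exactly the two technical steps the doubling trick sidesteps: (1) extracting a real analytic square root $b_k$ of the nonnegative eigenvalue $b_k^2$ of $BB^*$, which genuinely requires a further monomialization (after which evenness of the exponents and positivity of the unit do follow from $b_k^2\geq 0$); and (2) showing that $B_k/b_k$ extends analytically across $\{b_k=0\}$, which works by the division property for normal-crossings monomials combined with the entrywise bound $\|B_k\xi\|=|b_k|\,\|\xi\|$, but must be spelled out. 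Two smaller cautions: a single ``generic'' $t$ in $A+tBB^*$ need not separate the joint eigenvalues uniformly over all of $N$ (the bad set of $t$ varies with the base point), so the safe version of your first step is sequential --- reduce $A$ first, then reduce $BB^*$ restricted to each $A$-block, using that the two commute (the paper is equally terse about this combination step); and your final frame construction from the orthogonal complex structure $J_k=B_k/b_k$ needs no further blowing-up at all, since picking a local unit section $\bu_1$, setting $\bv_1=J_k\bu_1$ and inducting on the $J_k$-invariant orthogonal complement is already local and real analytic.
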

\begin{proof}[Proof of Theorem \ref{thm:main-real}]
As introduced at the end of Section \ref{section:ANO}, let us write 
$$
L = A + B, \;\; {\rm where} \;\; A:=\frac{L+L^*}{2} \;\; {\rm and} \;\; 
B = \frac{L-L^*}{2}.
$$
It is sufficient to have the result for anti-symmetric operators $F\to F$, 
since it is already true for symmetric ones. 

\medskip
Assume the normal operator $L:F\to F$ is anti-symmetric, that is $L = B$.
The real analytic vector bundle $F\oplus F$ is equipped with the
product of the scalar product of each embedding $O\oplus F$ and $F\oplus O$,
where $O$ is the zero-section of $F$.
We construct, as in Section \ref{section:ANO}, a symmetric operator over $F\oplus F$ from $L$, namely 
\begin{center}
$
\begin{array}{ccccc}
B_2 &  : & F\oplus F & \to & F \oplus F \\
 & & \bu\oplus\bv & \to & -B\bv \oplus B\bu
\end{array}
$
\end{center}
Its kernel is the direct sum $\ker L \oplus \ker L$.
In order to avoid introducing new notations we can already assume
that $B_2$ satisfies Theorem \ref{thm:main-real} for symmetric operator.

Let $\by$ be any point of $N$. There exists a neighbourhood $\cV$ of $\by$
such that there exists a real analytic section 
$$
f : \cV\to Frame(F\oplus F)
$$
such that for each $\bx$ in $\cV$ the frame $f(\bx)$ of $F_\bx\oplus F_\bx$
is orthonormal and consists of eigen-vectors of $B_2(\bx)$. 
We can always assume that $F$ is trivial over $\cV$.

There exists a (maximal) real analytic vector sub-bundle $R_b$ of $(F\oplus F)|_\cV$
consisting only of eigen-vector of $B_2$ associated to the 
eigen-value $b\neq 0$ (outside $D_L$). 
Thus $R_b$ is generated by $\bff_1,\ldots,\bff_{2d}$ sections of $F\oplus F$
over $\cV$ and vectors of the frame $f$.
Let $\bff_j= \bu_j\oplus\bv_j$ for pairs of orthogonal sections $\bu_j,\bv_j$ of $F$ and 
let $\bff_i' := \bu_i \oplus(-\bv_i)$. 
Following the classical material presented in Section \ref{section:ANO}, 
The sections $\bff_1',\ldots,\bff_{2d}'$ form an orthonormal basis of $R_{-b}$,
the eigen-space bundle of $B_2$ corresponding to $-b$. We observe that 
$\bu_i$ and $\bv_i$ never vanish over $\cV$. 
We can assume that $\bff_1',\ldots,\bff_{2d}'$ are also in the frame $f$.
By Lemma \ref{lem:plane-invariant}, the section $(-\bv_1)\oplus\bu_1$ 
lies in $R_b$ and is orthogonal to $\bff_1$. So
we can assume that $\bff_2 = (-\bv_1)\oplus\bu_1$ is a vector of the frame $f$.

Let $f_b$ be the free family of vectors of the frame $f$ generating $R_b$, and $f_{-b}$
be the free family of $f$ generating $R_{-b}$ (orthogonal to $f_b$).
Thus $f_b = \{\bff_1,\bff_2,f_b^1\}$ where $f_b^1$ is orthonormal and orthogonal 
to $\bff_1,\bff_2$, and $f_{-b} =\{\bff_1',\bff_2',f_{-b}^1\}$
where $f_{-b}^1$ is orthonormal and orthogonal 
to $\bff_1',\bff_2'$. 
Let $R_b^1$ (resp. $R_{-b}^1$) be the real analytic sub-bundle of $R_b$ (resp. $R_{-b}$) generated by $f_b^1$
(resp. $f_{-b}^1$).
Note that the plane $\R\bff_1 + \R\bff_2 = \R\bff_1' + \R\bff_2' \subset F\oplus F$ and $R_b^1,R_{-b}^1$ are both invariant 
by $B_2$.

A descending induction process produces a real orthogonal and maximal free family
$\bu_1,\bv_1,\ldots,\bu_d,\bv_d$ of $F$, such that
each plane $C_{b,j} := \R\bu_j+\R\bv_j \subset V$ is an ARCP of $B$ and 
the eigen-vector space of $B^2$ associated to $-b^2$ is the direct sum 
of the $C_{b,j}$ $j=1,\ldots,2$. 

And this concludes point (i) in the anti-symmetric case.

\medskip
The eigen-values of $B$ are exactly the $\pm\sqrt{-1}b_i$, which are already analytic.
\end{proof}
%
%
%
%
%
%
%
%
%
%
%
%
%
%
%
%
%
%
%
%
%
%
%
%
%
%
%
%
%
%
%
%
%
%
%
%
%
%
%
%
%

%
%
\end{document}